\date{}
\renewcommand{\uppercasenonmath}[1]{}
\newtheorem*{Conflict of interest}{Conflict of interest}
\theoremstyle{plain}
\newtheorem{theorem}{Theorem}[section]
\newtheorem{proposition}[theorem]{Proposition}
\newtheorem{lemma}[theorem]{Lemma}
\newtheorem{corollary}[theorem]{Corollary}
\newtheorem*{open question}{Open Question}
\newtheorem{definition}[theorem]{Definition}
\newtheorem{question}{Question}
\theoremstyle{definition}
\newtheorem*{acknowledgement}{Acknowledgement}
\theoremstyle{remark}
\newtheorem{remark}[theorem]{Remark}
\def\p{\frak p}
\def\m{\frak m}
\def\Ker{{\rm Ker}}
\def\Im{{\rm Im}}
\def\Coker{{\rm Coker}}
\def\Mod{{\rm Mod}}
\begin{document}
\begin{center}
{\large  \bf Almost Noetherian rings and modules}

\vspace{0.5cm}   Xiaolei Zhang
%\bigskip

{\footnotesize
\ School of Mathematics and Statistics,	Tianshui Normal University, tianshui 741001, China\\

E-mail: zxlrghj@163.com\\}
\end{center}
%\begin{figure}[b]
%\rule[-2.5truemm]{5cm}{0.1truemm}\\[2mm]
%{\small }
%\end{figure}

%\begin{figure}[b]
%\rule[-2.5truemm]{5cm}{0.1truemm}\\[2mm]
%{\small }
%\end{figure}
\bigskip
\centerline { \bf  Abstract}
\bigskip
\leftskip10truemm \rightskip10truemm \noindent

In this paper, we investigate the notions of almost Noetherian rings and modules.  In details, we give the Cohen type theorem, Eakin-Nagata type theorem, Kaplansky  type   Theorem and Hilbert basis theorem and some other rings constructions for almost Noetherian rings. In particular, we resolve a question proposed in \cite[B. Zavyalov,
{\it  Almost coherent modules and almost coherent sheaves},
Memoirs of the European Mathematical Society 19. Berlin: European Mathematical Society (EMS), 2025]{B25} under a certain condition.
\vbox to 0.3cm{}\\
{\it Key Words:} almost Noetherian ring,  almost Noetherian module, Cohen type theorem, Eakin-Nagata type theorem, Kaplansky  type   Theorem, Hilbert Basis Theorem.\\
{\it 2020 Mathematics Subject Classification:} 13E05.

\leftskip0truemm \rightskip0truemm
\bigskip
%\section { \bf Introduction    }
%\bigskip

\section{Introduction}
Throughout this paper,  we fix a ``base'' commutative ring $R$ with an ideal $\m$ such that $\m^2= \m$. For a ring $R$,
we always do almost mathematics on $R$ with respect to $\m$. Let $M$ be an $R$-module, and $S$ a subset of $M$. We denote by $\langle S\rangle$ the $R$-submodule of $M$ generated by $S$.

The theory of almost rings was pioneered by G. Faltings in the late 1980s and 1990s as the essential machinery for his proofs of the major conjectures in $p$-adic Hodge Theory \cite{F88}. These results describe the deep structure of the cohomology of algebraic varieties over $p$-adic fields.  Faltings' insight was that by systematically neglecting torsion elements controlled by the maximal ideal, the proofs became dramatically more conceptual, transparent, and powerful.  The true testament to the power of almost ring theory came with the rise of perfectoid geometry, developed by P. Scholze \cite{S12}. The fundamental theorem of perfectoid geometry, the Tilting Correspondence, states that the geometry of a perfectoid algebra in characteristic zero is ``almost equivalent'' to the geometry of its tilt, a perfect algebra in characteristic $p$. This ``almost equivalence'' is expressed precisely through the language of almost isomorphisms. Almost ring theory provides the indispensable dictionary that translates problems from mixed characteristic to positive characteristic, where they are often dramatically simpler to solve. For more details on almost rings, please refer to \cite{GR03,B25}.

Noetherian rings, named after the groundbreaking mathematician E. Noether, is a cornerstone of commutative algebra and algebraic geometry. The Cohen type theorem states that a ring $R$ is a Noetherian if and only if every prime ideal of $R$ is finitely generated; the Eakin-Nagata type theorem states that a ring $R$ is  Noetherian if and only if a ring $T$ as its finitely generated module extension is also a Noetherian ring;  the Kaplansky  type  theorem states that  a ring $R$ is Noetherian if and only if it admits a faithful Noetherian module; the Hilbert basis theorem states that a ring $R$ is  Noetherian if and only if its polynomial ring $R[x]$ is a Noetherian ring. These results are fundamental and important in the area of commutative algebras. In the theory of almost mathematics, B.  Zavyalov \cite{B25} recently introduced the notion of almost Noetherian rings, which plays a key role in the almost ring theory. The main motivation of this paper is to extend the classical results in Noetherian rings as above to almost Noetherian rings. Moreover, we give some other rings constructions, such as trivial extensions, pull-backs and amalgamations, for almost Noetherian rings.

As our results concerns almost rings, we refer some basic notions from \cite{GR03,B25}.  An $R$-module $M$ is said to be almost zero, if $\m M$ is the zero module.
The category $\Sigma_R$, which is the full subcategory of $\Mod_{R}$ of all $R$-modules consisting of all  almost zero $R$-modules, is a Serre subcategory of $\Mod_{R}$. So, one can introduce the quotient category, which is called the category of \emph{almost $R$-modules},
\[
\Mod^a_{R}:=\Mod_{R}/\Sigma_R.
\]
Note that the localization functor
\[
(-)^a \colon \Mod_{R} \to \Mod^a_{R}
\]
is exact. We refer to elements of $\Mod^a_R$ as almost
$R$-modules or $R^a$-modules

A morphism $f\colon M\to N$ is called an \emph{almost isomorphism} (resp.\ \emph{almost injection}, resp.\ \emph{almost surjection}) if the corresponding morphism $f^a\colon M^a\to N^a$ is an isomorphism (resp.\ injection, resp.\ surjection) in $\mathrm{Mod}^a_R$. It follows by \cite[Lemma 2.1.8]{B25} that the morphism $f$ is an almost injection (resp.\ almost surjection, resp.\ almost isomorphism) if and only if $\operatorname{Ker}(f)$ (resp.\ $\operatorname{Coker}(f)$, resp.\ both $\operatorname{Ker}(f)$ and $\operatorname{Coker}(f)$) is an almost zero module.
\section{Almost Noetherian modules and their basic properties}

 Recall from \cite[Definition 2.5.1]{B25} that an $R$-module $M$ is said to be  \emph{almost finitely generated}, if for any $s \in \mathfrak m$ there are an integer $n_{s} \ge 0$ and an $R$-homomorphism
$f_{s} \colon R^{n_{s}} \to M$
such that $\operatorname{Coker}(f_{s})$ is killed by $s$, which is equivalent to that for any  $s \in \mathfrak m$ there exists a finitely generated submodule $N_s$ of $M$ such that $s M\subseteq N_s$. Recall from \cite[Definition 2.7.1]{B25} that a ring $R$ is said to be almost Noetherian if every ideal of $R$ is almost finitely generated.

To give a further study of  almost Noetherian rings, we introduce the notion of almost Noetherian modules.

\begin{definition}
	An almost finitely generated $R$-module $M$ is said to be almost Noetherian, if every submodule of $M$ is almost finitely generated.
\end{definition}

Trivially, a ring $R$ is an almost Noetherian ring if $R$ itself is an almost Noetherian $R$-module. Infinite direct sums of copies of almost zero non-zero modules is an almost zero module, and thus is almost Noetherian, but non-Noetherian.

\begin{remark}\label{pvr} \cite[section 2.11]{B25}
Fix a perfectoid valuation ring $K^+$ with perfectoid fraction field $K$, associated rank-$1$ valuation ring $\mathcal{O}_K=K^\circ$ , and ideal of topologically nilpotent elements $\m=K^{\circ\circ}\subset K^+$. Then $\m$ is flat over $K^+$ and $\widetilde{\m}\cong\m^2=\m$. It follows by \cite[Theorem 2.11.5]{B25} that any  a topologically finite type $K^+$-algebra is an almost Noetherian ring. Since every perfectoid valuation ring is not Noetherian, there exist non-trivial almost Noetherian modules which are not Noetherian.
\end{remark}

%\begin{lemma}\label{s-exct-diag}
%	Let $R$ be a ring.  Let
%	$$\xymatrix@R=20pt@C=25pt{
%		0 \ar[r]^{}&A_1\ar@{^{(}->}[d]^{i_A}\ar[r]& N_1 \ar[r]^{\pi_1}\ar@{^{(}->}[d]^{i_B}&C_1\ar[r] \ar@{^{(}->}[d]^{i_C} &0\\
%		0 \ar[r]^{}&A_2\ar[r]&B_2 \ar[r]^{\pi_2}&C_2\ar[r] &0\\}$$
%	be a commutative diagram of $R$-modules with exact rows, where $i_A, i_B$ and $i_C$ are embedding maps. Suppose $s_A A_2\subseteq A_1$ and $s_C C_2\subseteq C_1$ for some $s_A, s_C\in\m$. Then $s_As_CB_2\subseteq B_1$.
%\end{lemma}

\begin{proposition} \label{s-u-noe-exact}
	Let  $0\rightarrow M\rightarrow N\rightarrow L\rightarrow 0$ be an exact sequence of $R$-modules. Then $N$ is almost  Noetherian if and only if $M$ and $L$ are almost  Noetherian.
\end{proposition}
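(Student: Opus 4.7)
The plan is to mimic the classical proof that Noetherian-ness is closed under extensions, with one critical modification in the converse direction where the hypothesis $\m^2 = \m$ must be invoked.

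For the forward implication, I would argue as follows. If $B$ is almost Noetherian, then $A \subseteq B$ is almost finitely generated by hypothesis, and any submodule $A' \subseteq A$ is a submodule of $B$, hence almost finitely generated; so $A$ is almost Noetherian. For $C$, I would first observe that the almost finitely generated property passes to quotients: if $sM \subseteq N$ with $N$ finitely generated and $K \subseteq M$, then $s(M/K) \subseteq (N+K)/K$ is still contained in a finitely generated submodule. Thus $C = B/A$ is almost finitely generated. For any submodule $C' \subseteq C$, take its preimage $B' \subseteq B$; since $B$ is almost Noetherian, $B'$ is almost finitely generated, and $C' \cong B'/A$ is almost finitely generated as a quotient.

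The converse requires more care. First I would establish that $B$ itself is almost finitely generated. Fix $s \in \m$; since $\m = \m^2$, write $s = \sum_{i=1}^n t_i u_i$ with $t_i, u_i \in \m$. For each $i$, pick a finitely generated submodule $\overline{N}_i \subseteq C$ with $u_i C \subseteq \overline{N}_i$, and lift generators to obtain a finitely generated $N_i \subseteq B$ satisfying $u_i B \subseteq N_i + A$. Multiplying by $t_i$ gives
\[
t_i u_i B \subseteq t_i N_i + t_i A \subseteq t_i N_i + M_i,
\]
where $M_i \subseteq A$ is a finitely generated submodule containing $t_i A$, which exists because $A$ is almost finitely generated. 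Summing over $i$ shows that $sB$ is contained in a finitely generated submodule of $B$. For an arbitrary submodule $B' \subseteq B$, I would then apply the same argument to the induced short exact sequence
\[
0 \to A \cap B' \to B' \to B'/(A \cap B') \to 0,
\]
noting that $A \cap B'$ is almost finitely generated as a submodule of the almost Noetherian module $A$, and $B'/(A \cap B')$ embeds into $C$, hence is almost finitely generated as a submodule of the almost Noetherian module $C$.

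The main obstacle is precisely this \emph{two-step lift} in the converse: unlike ordinary finite generation, the afg property is not preserved under extensions by a direct one-step lift of generators of the quotient, because the resulting error term only lies in $A$, not in any prescribed finitely generated submodule of $A$. The hypothesis $\m^2 = \m$ provides exactly the room needed to multiply once more by an element of $\m$ and absorb the error into a finitely generated piece of $A$; without this factorization of $s$, the argument would stall.
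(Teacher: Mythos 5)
Your proof is correct and follows essentially the same line as the paper's. Both arguments hinge on the same two ingredients: writing an arbitrary $s \in \m$ as a finite sum of products $\sum t_i u_i$ via $\m = \m^2$, and combining lifts of generators from the quotient with generators from the intersection $A \cap B'$ to absorb the error term into a finitely generated piece. The only cosmetic difference is one of organization: the paper attacks an arbitrary submodule $B'$ directly (constructing $N_{s,t}$ from generators of $K_s \subseteq A \cap B'$ and preimages of generators of $L_t \subseteq (B'+A)/A$, getting $stB' \subseteq N_{s,t}$, then summing over the decomposition of $r \in \m$), whereas you first verify $B$ is almost finitely generated and then re-run the argument on $B'$ via $0 \to A\cap B' \to B' \to B'/(A\cap B') \to 0$. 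Since the case $B' = B$ is a special instance of the paper's argument, the two write-ups amount to the same thing, and your closing remark correctly identifies why $\m^2 = \m$ is the essential hypothesis.
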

\begin{proof} It is easy to verify that if $N$ is almost  Noetherian, then so are $M$ and $L$. Suppose that $M$ and $L$ are almost  Noetherian. Let $N'$ be a submodule of $N$.  Since $M$ is almost  Noetherian, for any  $s\in \m$ there exists finitely generated $R$-module $K_s$ such that  $s(M\cap N')\subseteq K_s\subseteq M\cap N'$. Since $L$ is almost  Noetherian, for any $t\in\m$ there exists  some finitely generated $R$-module $L_t$ such that $t[(N'+M)/M]\subseteq L_t\subseteq (N'+M)/M$. Let $N_{s,t}$ be the finitely generated submodule of $N'$ generated by the finite generators of $K_s$ and finite pre-images of generators of  $L_t$.  Consider the following natural commutative diagram with exact rows: $$\xymatrix@R=20pt@C=25pt{
		0 \ar[r]^{}&K\ar@{^{(}->}[d]\ar[r]&N_{s,t} \ar[r]\ar@{^{(}->}[d]&L_t\ar[r] \ar@{^{(}->}[d] &0\\
		0 \ar[r]^{}&M\cap N'\ar[r]&N' \ar[r]&(N'+M)/M \ar[r] &0.\\}$$
It is easy to check that  $stN'\subseteq N_{s,t}\subseteq N'$ for any $s,t\in \m$.
	Since $\m=\m^2$, every element $r$ in $\m$ can be written into $r=\sum\limits_{i}s_it_i$ for some finite $s_i,t_i\in \m$. $rN'\subseteq \sum\limits_{i}N_{s_i,t_i}\subseteq N'$. Note that $\sum\limits_{i}N_{s_i,t_i}$ is finitely generated. Hence, $N$ is almost  Noetherian.
\end{proof}

We call a sequence $\cdots\rightarrow M_{n+1}\xrightarrow{f_{n+1}} M_{n}\xrightarrow{f_{n}} M_{n-1}\xrightarrow{} \cdots$ of $R$-modules \emph{almost exact} at $M_n$ if % the corresponding sequence $\cdots\rightarrow M^a_{n+1}\xrightarrow{f^a_{n+1}} M^a_{n}\xrightarrow{f^a_{n}} M^a_{n-1}\xrightarrow{}\cdots$ is exact at $M_n$ as $R^a$-modules, equivalently,
 for any $s \in \mathfrak m$, $s\Ker(f_n)\subseteq\Im(f_{n+1})$ and $s\Im(f_{n+1})\subseteq\Ker(f_n)$. A sequence  of $R$-modules is called an \emph{almost exact sequence} if it is almost exact at each term. Certainly, an $R$-homomorphism $f:M\rightarrow N$ is an almost injection (resp.\ almost surjection, resp.\ almost isomorphism) if and only if  $0\rightarrow  M\xrightarrow{f} N$ (resp.\ $M\xrightarrow{f} N\rightarrow 0$, resp.\ $0\rightarrow  M\xrightarrow{f} N \rightarrow 0$) is almost exact.

\begin{theorem} \label{s-u-noe-s-exact}
	Let $0\rightarrow M\xrightarrow{f} N\xrightarrow{g} L\rightarrow 0$ be an almost exact sequence of $R$-modules. Then $N$ is almost  Noetherian if and only if $M$ and $L$ are almost  Noetherian
\end{theorem}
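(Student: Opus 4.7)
My plan is to reduce the almost exact case to the genuine exact case treated in Proposition~\ref{s-u-noe-exact}. The strategy is to extract an honest short exact sequence $0\to \Ker(g)\to B\to \Im(g)\to 0$ from the almost exact sequence $0\to A\xrightarrow{f} B\xrightarrow{g} C\to 0$, and then show that $A$ is almost Noetherian iff $\Ker(g)$ is, and $C$ is almost Noetherian iff $\Im(g)$ is.

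First I would record two trivial preliminary observations. Any almost zero module is almost Noetherian (its submodules are again almost zero, hence almost finitely generated). Combining this with Proposition~\ref{s-u-noe-exact} gives the convenient lemma: if $L\subseteq M$ is a submodule such that $M/L$ is almost zero, then $L$ is almost Noetherian iff $M$ is.

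Next I would unpack the almost exactness hypothesis at each of the three terms. Almost exactness at $A$ says $\m\Ker(f)=0$, so $\Ker(f)$ is almost zero; almost exactness at $C$ says $\m C\subseteq \Im(g)$, so $\Coker(g)$ is almost zero; almost exactness at $B$ gives the two inclusions $\m\Ker(g)\subseteq \Im(f)$ and $\m\Im(f)\subseteq \Ker(g)$. Applying Proposition~\ref{s-u-noe-exact} to $0\to \Ker(f)\to A\to \Im(f)\to 0$ yields $A$ almost Noetherian iff $\Im(f)$ is; the preliminary lemma applied to $\Im(g)\subseteq C$ yields $C$ almost Noetherian iff $\Im(g)$ is; and Proposition~\ref{s-u-noe-exact} applied to $0\to \Ker(g)\to B\to \Im(g)\to 0$ yields $B$ almost Noetherian iff both $\Ker(g)$ and $\Im(g)$ are.

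The one place that requires genuine care — and will be the main obstacle — is linking $\Im(f)$ to $\Ker(g)$, since neither is contained in the other (we only have approximate inclusions scaled by $\m$). To handle this I would introduce the sum $J:=\Im(f)+\Ker(g)\subseteq B$. Using $\m\Im(f)\subseteq\Ker(g)$ I get $\m\cdot(J/\Ker(g))=0$, and using $\m\Ker(g)\subseteq\Im(f)$ I get $\m\cdot(J/\Im(f))=0$; both quotients are almost zero. Two applications of the preliminary lemma then yield $\Im(f)$ almost Noetherian iff $J$ almost Noetherian iff $\Ker(g)$ almost Noetherian. Assembling the chain of equivalences concludes the proof.
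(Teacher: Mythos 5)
Your proof is correct and follows the same overall strategy as the paper's: decompose the almost exact sequence into the three genuine exact sequences $0\to\Ker(f)\to A\to\Im(f)\to 0$, $0\to\Ker(g)\to B\to\Im(g)\to 0$, $0\to\Im(g)\to C\to\Coker(g)\to 0$, use that $\Ker(f)$ and $\Coker(g)$ are almost zero (hence almost Noetherian) together with Proposition~\ref{s-u-noe-exact}, and then transfer the almost Noetherian property between $\Im(f)$ and $\Ker(g)$. The only genuine difference is in that last transfer. The paper compares $\Im(f)$ and $\Ker(g)$ through the intermediate submodules $s\Im(f)\subseteq\Ker(g)$ and $s\Ker(g)\subseteq\Im(f)$ and asserts that $\Ker(g)/s\Im(f)$ and $\Im(f)/s\Ker(g)$ are almost zero; for a single fixed $s\in\m$ this is not justified (one only has $\m\Ker(g)\subseteq\Im(f)$, not $\m\Ker(g)\subseteq s\Im(f)$), and the argument really needs $\m\Im(f)$ and $\m\Ker(g)$ in place of $s\Im(f)$ and $s\Ker(g)$, together with $\m=\m^2$ to see the quotients are almost zero. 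Your device of passing to the sum $J=\Im(f)+\Ker(g)$, which contains both modules with $\m J\subseteq\Im(f)\cap\Ker(g)$ directly from the almost exactness inclusions, accomplishes the same transfer more cleanly, sidestepping the issue above and not even requiring $\m=\m^2$ at this step. Both routes then assemble the same chain of equivalences via Proposition~\ref{s-u-noe-exact}.
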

\begin{proof}  Let $0\rightarrow M\xrightarrow{f} N\xrightarrow{g} L\rightarrow 0$ be an almost exact sequence. Then for any  $s\in \m$ we have  $ s\Ker(g)\subseteq  \Im(f)$ and $ s\Im(f)\subseteq  \Ker(g)$. So $st\Ker(g)\subseteq s\Im(f)\subseteq \Ker(g)$ and $st\Im(f)\subseteq s\Ker(g)\subseteq \Im(f)$ for any $s,t\in\m$.  If $\Im(f)$ is almost  Noetherian, then the submodule $s\Im(f)$ of $\Im(f)$ is almost  Noetherian. Since $\m=\m^2$, every element $r$ in $\m$ can be written into $r=\sum\limits_{i}s_it_i$ for some finite $s_i,t_i\in \m$. So $r\Ker(g)\subseteq (\sum\limits_{i}s_i)\Im(f)\subseteq \Ker(g)$ for any $r\in\m$. Now, it is trivial to check that $\Ker(g)$ is almost  Noetherian. Similarly, if  $\Ker(g)$ is almost  Noetherian, then $\Im(f)$ is also almost  Noetherian. Consider the following three exact sequences:
	\begin{eqnarray*}
		% \nonumber % Remove numbering (before each equation)
		0\rightarrow\Ker(g) \rightarrow  N\rightarrow \Im(g)\rightarrow 0, \\
		0\rightarrow\Im(g) \rightarrow  L\rightarrow \Coker(g)\rightarrow 0, \\
		0\rightarrow\Ker(f) \rightarrow  M\rightarrow \Im(f)\rightarrow 0
	\end{eqnarray*}
	with $\Ker(f)$ and $\Coker(g)$ almost  zero.   It is easy to verify that $N$ is almost  Noetherian if and only if $M$ and $L$ are almost  Noetherian by Proposition \ref{s-u-noe-exact}.
\end{proof}
\begin{corollary} \label{s-u-noe-u-iso}
	Let $M\xrightarrow{f} N$  an almost isomorphism of $R$-modules. If one of $M$ and $N$ is almost  Noetherian, then so is the other.
\end{corollary}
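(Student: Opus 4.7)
The plan is to reduce the corollary to the short exact sequence version (Proposition \ref{s-u-noe-exact}) by factoring $f$ through its image. Since $f$ is an almost isomorphism, both $\Ker(f)$ and $\Coker(f)$ are almost zero; in particular they are almost finitely generated (via the zero map) and every submodule of them is almost zero and hence almost finitely generated, so they are both almost Noetherian.

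Next, I would split the four-term exact sequence $0 \to \Ker(f) \to M \xrightarrow{f} N \to \Coker(f) \to 0$ into the two honest short exact sequences
\begin{equation*}
0 \to \Ker(f) \to M \to \Im(f) \to 0 \quad \text{and} \quad 0 \to \Im(f) \to N \to \Coker(f) \to 0.
\end{equation*}
Applying Proposition \ref{s-u-noe-exact} to the first sequence shows that $M$ is almost Noetherian if and only if $\Im(f)$ is (using that $\Ker(f)$ is almost Noetherian). Applying the same proposition to the second sequence shows that $N$ is almost Noetherian if and only if $\Im(f)$ is (using that $\Coker(f)$ is almost Noetherian). Combining these two equivalences yields the claim.

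As an even quicker alternative, one could bypass $\Im(f)$ entirely and invoke Theorem \ref{s-u-noe-s-exact} directly on $0 \to M \xrightarrow{f} N \to 0 \to 0$, since by definition this sequence is almost exact precisely when $f$ is an almost isomorphism; taking $A = M$, $B = N$, $C = 0$ immediately gives the conclusion.

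There is no real obstacle here: the only thing to check carefully is the trivial observation that almost zero modules are almost Noetherian, which is clear from the definition since $\m$ annihilates every submodule. The rest is a formal application of the results already established.
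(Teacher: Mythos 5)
Your proposal is correct, and your ``even quicker alternative'' at the end is \emph{exactly} the paper's own proof: it applies Theorem~\ref{s-u-noe-s-exact} to the almost exact sequence $0\to M\xrightarrow{f} N\to 0\to 0$, using the observation (already recorded in the paper just after the definition of almost exactness) that this sequence is almost exact precisely when $f$ is an almost isomorphism. Your main route through $\Im(f)$ and the two honest short exact sequences is also valid, but it essentially re-executes, in this special case, the very decomposition used to prove Theorem~\ref{s-u-noe-s-exact}; the paper's proof is shorter only because it has already paid that cost once.
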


\begin{proof} This follows from Proposition \ref{s-u-noe-s-exact} since $0\rightarrow M\xrightarrow{f} N\rightarrow 0\rightarrow 0$ is an almost exact sequence.
\end{proof}

\begin{corollary}\label{exact-p}
Let $R$ be an almost Noetherian ring. Then
	$R^n$ is an almost Noetherian $R$-Noetherian $R$-module.
\end{corollary}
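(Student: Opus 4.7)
The plan is a straightforward induction on $n$, using Proposition \ref{s-u-noe-exact} as the inductive engine. (I read the statement as asserting that $R^n$ is an almost Noetherian $R$-module; the extra ``$R$-Noetherian'' in the display appears to be a typographical slip.)

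For the base case $n=1$, the hypothesis that $R$ is an almost Noetherian ring means precisely that every ideal of $R$ is almost finitely generated, i.e. $R$ is almost Noetherian as a module over itself. For the inductive step, assume $R^{n-1}$ is an almost Noetherian $R$-module. Consider the split short exact sequence
\[
0 \longrightarrow R \longrightarrow R^{n} \longrightarrow R^{n-1} \longrightarrow 0,
\]
where the first map is inclusion into (say) the first coordinate and the second map is projection onto the remaining $n-1$ coordinates. Both outer terms are almost Noetherian by the induction hypothesis and the base case, so Proposition \ref{s-u-noe-exact} forces $R^n$ to be almost Noetherian as well.

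There is no real obstacle here: the nontrivial content has already been absorbed into Proposition \ref{s-u-noe-exact}, and the only thing to verify on top of that is the base case, which is a direct unwinding of definitions (the underlying module of $R$ over itself has as submodules exactly the ideals of $R$, and each of these is almost finitely generated by assumption). If one wanted to avoid even spelling out the induction, one could equivalently write $R^n$ as an iterated extension of $n$ copies of $R$ and apply Proposition \ref{s-u-noe-exact} $n-1$ times.
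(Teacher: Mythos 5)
Your proof is correct and follows essentially the same route as the paper: induction on $n$ via a short exact sequence relating $R$, $R^{n-1}$, and $R^n$, with Proposition \ref{s-u-noe-exact} doing the work. The only (immaterial) difference is the orientation of the exact sequence — you use $0\to R\to R^n\to R^{n-1}\to 0$ while the paper uses $0\to R^{n-1}\to R^n\to R\to 0$ — and you are also right that ``$R$-Noetherian'' in the statement is a typo.
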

\begin{proof} Consider the exact sequence $0\rightarrow R^{n-1}\rightarrow R^n\rightarrow R\rightarrow0$. Following Proposition \ref{s-u-noe-exact}, it can be induced by induction on $n$.
\end{proof}

\begin{corollary}\label{afg-aNoe}
Let $R$ be an almost Noetherian ring. Then any almost finitely generated $R$-module is an almost Noetherian $R$-module.	
\end{corollary}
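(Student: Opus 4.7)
The strategy is to combine the ``almost finite generation'' of $M$ with the finitely generated case handled by Corollary \ref{exact-p}, exploiting $\m=\m^2$ at the end to replace a product $st$ by an arbitrary $r\in\m$. Fix an almost finitely generated $R$-module $M$ and an arbitrary submodule $M'\subseteq M$; the goal is to prove that for every $r\in\m$ there is a finitely generated submodule $L_r\subseteq M'$ with $rM'\subseteq L_r$.

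First, for each $s\in\m$, I would use the definition of almost finite generation to choose a finitely generated submodule $N_s\subseteq M$ with $sM\subseteq N_s$. Since $N_s$ is a quotient of some $R^n$, Corollary \ref{exact-p} together with Proposition \ref{s-u-noe-exact} shows that $N_s$ is almost Noetherian. The submodule $N_s\cap M'$ of $N_s$ is therefore almost finitely generated, so for every $t\in\m$ there exists a finitely generated submodule $K_{s,t}\subseteq N_s\cap M'$ with $t(N_s\cap M')\subseteq K_{s,t}$. Because $sM'\subseteq sM\subseteq N_s$ and $sM'\subseteq M'$, we have $sM'\subseteq N_s\cap M'$, hence
\[
(ts)M'=t(sM')\subseteq t(N_s\cap M')\subseteq K_{s,t}\subseteq M'.
\]

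Finally, given any $r\in\m=\m^2$, I would write $r=\sum_{i=1}^{k}s_it_i$ with $s_i,t_i\in\m$, and set $L_r:=\sum_{i=1}^{k}K_{s_i,t_i}$. Then $L_r$ is a finite sum of finitely generated submodules of $M'$, hence finitely generated, and
\[
rM'=\sum_{i=1}^{k}s_it_iM'\subseteq\sum_{i=1}^{k}K_{s_i,t_i}=L_r.
\]
This proves that $M'$ is almost finitely generated, and since $M'\subseteq M$ was arbitrary, $M$ is almost Noetherian. The only subtle point is the transition from ``$st$ annihilates $M'/K_{s,t}$'' to ``$r$ annihilates $M'/L_r$'' for arbitrary $r\in\m$, which is precisely where the hypothesis $\m^2=\m$ is used; this is the same device that drives the proof of Proposition \ref{s-u-noe-exact}, so it should be the main conceptual step rather than a genuine obstacle.
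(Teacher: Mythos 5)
Your proof is correct, and it is in fact more careful than the paper's one-line argument. The paper asserts that an almost finitely generated $M$ sits in an ``almost exact sequence'' $R^n\rightarrow M\rightarrow 0$, but that would require a single fixed map $R^n\to M$ whose cokernel is killed by every $s\in\m$, i.e.\ that $\m M$ is contained in one finitely generated submodule of $M$; this is strictly stronger than almost finite generation, where the number of generators $n_s$ is allowed to vary with $s$ (already $M=\m$ over a non-discrete rank-one valuation ring with $\m=\m^2$ is a counterexample). Your route sidesteps this: you never build a single almost exact presentation of $M$. Instead, for each $s\in\m$ you take the honestly finitely generated $N_s\subseteq M$ with $sM\subseteq N_s$, apply Corollary \ref{exact-p} and Proposition \ref{s-u-noe-exact} only to $N_s$, extract almost finite generation of $N_s\cap M'$ for the given submodule $M'$, and finally splice the resulting $(s,t)$-indexed inclusions $tsM'\subseteq K_{s,t}$ into a bound for an arbitrary $r\in\m$ via $\m=\m^2$. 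This is precisely the double-indexing device the paper itself uses in the proof of Proposition \ref{s-u-noe-exact}. Both approaches ultimately reduce to the almost Noetherianity of $R^n$; the difference is that you apply that fact \emph{locally} to each $N_s$, whereas the paper attempts to apply it \emph{globally} to $M$ in a way that its own definition of almost finite generation does not license. Your argument thus not only proves the corollary but repairs the gap in the cited proof.
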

\begin{proof}  Suppose $M$ is an any almost finitely generated $R$-module, and $N$ an submodule of $M$. It follows by \cite[Lemma 2.5.15, Lemma 2.7.3]{B25} that $M/N$ is almost finitely presented. Then $N$ is almost finitely generated by   \cite[Lemma 2.5.15]{B25}.
\end{proof}

\section{Cohen type theorem for almost Noetherian rings and modules}

The well-known Cohen type theorem states that a ring $R$ is a Noetherian ring if and only if every prime ideal $\p$ of $R$ is finitely generated; and furthermore, an $R$-module $M$ is a Noetherian $R$-module if and only if every submodule of the form $\p M$  is finitely generated. In this section, we give the Cohen type theorem for almost Noetherian rings and almost Noetherian modules.

\begin{lemma}\label{prime}
	Let $R$ be a ring, $M$ be an almost finitely generated
	$R$-module. If $N$ is a submodule of $M$ which is maximal among all non-almost finitely generated
	submodules of $M$ $($that is, there is no non-almost finitely generated submodules which strictly contain $N)$, then $[N:M]:=\{r\in R\mid rM\subseteq N\}$ is a prime ideal of $R$.
\end{lemma}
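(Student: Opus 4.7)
The plan is to mimic the classical Cohen-style argument for primality of maximal non-finitely-generated annihilator-quotients, while tracking the loss of information caused by the ``almost'' qualifier through the twin hypotheses $\m = \m^2$ and the existence of good finite approximations by multiplying by elements of $\m$.

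First I would argue by contradiction. Suppose $[N:M]$ is not prime, so there exist $a,b \in R$ with $ab \in [N:M]$ but $a \notin [N:M]$ and $b \notin [N:M]$. From $a \notin [N:M]$ we get $aM \not\subseteq N$, so the submodule $N + aM$ strictly contains $N$; from $ab \in [N:M]$ we have $bM \subseteq (N :_M a) := \{m \in M : am \in N\}$, and since $bM \not\subseteq N$ this shows $(N :_M a)$ strictly contains $N$. By the maximality of $N$, both $N+aM$ and $(N :_M a)$ are almost finitely generated submodules of $M$.

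Next I would extract, for each $s \in \m$, a finitely generated submodule $P_s \subseteq N+aM$ with $s(N+aM) \subseteq P_s$; I may assume the generators have the form $n_1+ax_1,\dots,n_k+ax_k$ with $n_i \in N$ and $x_i \in M$. Likewise, for each $t \in \m$, I take a finitely generated $Q_t \subseteq (N :_M a)$ with $t(N :_M a) \subseteq Q_t$, generated by elements $y_1,\dots,y_\ell \in (N :_M a)$. The key computation is then: for $n \in N$, write $sn = \sum r_i(n_i + ax_i)$ inside $P_s$; then $a\sum r_i x_i \in N$, so $\sum r_i x_i \in (N :_M a)$, and hence $t\sum r_i x_i = \sum c_j y_j \in Q_t$. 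Multiplying by $a$ and substituting, I obtain
\[
 stn \;=\; t\sum r_i n_i \;+\; \sum c_j (ay_j),
\]
which lies in the finitely generated submodule $T_{s,t} \subseteq N$ generated by $n_1,\dots,n_k,ay_1,\dots,ay_\ell$ (note $ay_j \in N$ by definition of $(N :_M a)$). Thus $stN \subseteq T_{s,t} \subseteq N$.

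Finally I would invoke $\m = \m^2$: every $r \in \m$ can be written as a finite sum $r = \sum s_\alpha t_\alpha$ with $s_\alpha,t_\alpha \in \m$, giving $rN \subseteq \sum_\alpha T_{s_\alpha,t_\alpha} \subseteq N$, a finitely generated submodule of $N$. Hence $N$ is almost finitely generated, contradicting the choice of $N$. The main obstacle I anticipate is step three: transporting the classical exact identity ``$N$ is generated by $n_i$ and $ay_j$'' into the almost setting requires choosing the generators of $P_s$ in the specific form $n_i+ax_i$, and then spending one factor of $\m$ on the $(N+aM)$-approximation and a second factor on the $(N :_M a)$-approximation, so that the hypothesis $\m^2 = \m$ is precisely what rescues the argument at the end.
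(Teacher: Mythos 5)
Your proposal is correct and follows essentially the same argument as the paper: both approaches use the two auxiliary submodules $N+aM$ and $(N:_M a)$, write $sn$ in terms of generators of the first to extract an element of the second, spend a second factor of $\m$ on that, and finally invoke $\m=\m^2$ to reassemble a finitely generated approximation of $N$. Your write-up is in fact slightly more explicit than the paper's in justifying why $N+aM$ and $(N:_M a)$ strictly contain $N$, which is a useful clarification rather than a deviation.
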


\begin{proof}
	Set $\p=[N:M]$. Then $\p\not=R$ as $M$ is almost finitely generated but $N$ is not. On contrary, assume that $\p$ is not prime. Let $a,b\in R\setminus \p$ with $ab\in \p$. Then
	$N+aM$ is almost finitely generated as $N+aM$ strictly contains $N$. Hence,	for any $s\in \m$ there exist $n_{1,s},\dots,n_{p,s}\in N$ and $m_{1,s},\dots,m_{p,s}\in M$ such that
	\[
	s(N+aM)\subseteq\langle n_{1,s}+am_{1,s},\dots,n_{p,s}+am_{p,s}\rangle.
	\]
 Also, as $[N:a]:=\{m\in M\mid am\in N\}$ strictly contains $N$,
	$[N:a]$ is almost finitely generated. And so for any $t\in \m$, there exist $q_{1,t},\dots,q_{k,t}\in[N:a]$ such that
	\[
	t[N:a]\subseteq\langle  q_{1,t},\dots,q_{k,t}\rangle.
	\]
	 Now let $x\in N$. Then
	\[
	sx=\sum_{i=1}^{p} r_{i,s}(n_{i,s}+am_{i,s})
	\quad\text{for some } r_{i,s}\in R,
	\]
	so
	\[
	y=\sum_{i=1}^{p} r_{i,s}m_{i,s}\in[N:a].
	\]
	Then
	\[
	ty=\sum_{j=1}^{k} c_{j,t}q_{j,t}
	\quad\text{for some}\ c_{j,t}\in R.
	\]
	Therefore
	\[
	stx=\sum_{i=1}^{p} tr_{i,s}n_{i,s}+a\sum_{j=1}^{k} c_{j,t}q_{j,t}.
	\]
	So
	\[
	stN\subseteq\langle  tn_{1,s},\dots,tn_{p,s},aq_{1,t},\dots,aq_{k,t}\rangle\subseteq N.
	\]
Since $\m=\m^2$, every element $r$ in $\m$ can be written into $r=\sum\limits_{i}s_it_i$ for some finite $s_i,t_i\in \m$. Hence,	$$rN\subseteq\sum\limits_{i}\langle  t_in_{1,s_i},\dots,t_in_{p,s_i},aq_{1,t_i},\dots,aq_{k,t_i}\rangle\subseteq N.$$
Since the middle term is finitely generated, $N$ is almost finitely generated, which is a contradiction.
\end{proof}

\begin{lemma}\label{non-af-max}Let $R$ be a ring and $M$ be a non-almost Noetherian $R$-module. Then  the set $\mathcal{F}$
	of all non-almost finitely generated submodules of $M$ has a maximal element.
\end{lemma}
\begin{proof} Let $\{M_i\mid i \in\Gamma\}$ be an ascending chain of non-almost finitely generated submodules of $M$. Then we claim that  $\bigcup\limits_{i \in\Gamma} M_i$ is an upper bound. On contrary, if $\bigcup\limits_{i \in\Gamma} M_i$ is almost finitely generated, then for any $s\in\m$ there is a finitely generated submodule $M'_s$ of $\bigcup\limits_{i \in\Gamma} M_i$ such that $s\bigcup\limits_{i \in\Gamma} M_i\subseteq M'_s$. We can assume that $M'_s\subseteq M_{i_0}$ with $i_0\in\Gamma$. Then $s M_{i_0}\subseteq M'_s\subseteq M_{i_0}$ implying $M_{i_0}$ is almost finitely generated, which is a contradiction. So by Zorn's Lemma, one can choose an $R$-module $N$ maximal in $\mathcal{F}$.
\end{proof}

\begin{theorem}\label{cohentype} \textbf{$($Cohen type theorem for almost Noetherian  modules$)$}
	Let $R$ be a ring and $M$ be an almost finitely generated
	$R$-module. Then $M$ is almost Noetherian if and only if the submodules of the form
	$\p M$ are almost finitely generated for each prime ideal $\p$ of $R$.
\end{theorem}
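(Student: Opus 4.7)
The forward direction is immediate, as each $\p M$ is a submodule of $M$. For the converse, I would argue by contradiction: assume $M$ is not almost Noetherian and consider the collection $\mathcal{S}$ of submodules of $M$ which fail to be almost finitely generated. The plan is to apply Zorn's lemma to obtain a maximal element $N \in \mathcal{S}$, invoke Lemma \ref{prime} to conclude that $\p := [N:M]$ is prime, and then exploit the hypothesis that $\p M$ is almost finitely generated to produce a contradiction.

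The Zorn step requires care in the almost setting, since almost finite generation uses a separate finitely generated witness $K_s$ for each $s \in \m$. The key point to verify is that the union $N = \bigcup_\alpha N_\alpha$ of a chain of non-a.f.g.\ submodules cannot itself be almost finitely generated: if it were, then for each $s \in \m$ one has a finitely generated $K_s \subseteq N$ with $sN \subseteq K_s$, and $K_s$ sits in some $N_{\alpha(s)}$; using the decomposition $r = \sum s_i t_j$ from $\m = \m^2$, the same juggling as in Lemma \ref{prime} shows that $r N_\beta \subseteq \sum_{i,j} t_j K_{s_i}$ once $\beta$ is chosen in the chain above all the relevant $\alpha(s_i)$, forcing some $N_\beta$ to be almost finitely generated, which is a contradiction.

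Given such a maximal $N$ and its associated prime $\p = [N:M]$, consider the short exact sequence
\[
0 \to \p M \to N \to N/\p M \to 0.
\]
Since $\p M$ is almost finitely generated by hypothesis, and since the analog of Proposition \ref{s-u-noe-exact} for ``almost finitely generated'' in place of ``almost Noetherian'' holds (by the same $\m = \m^2$ device), it will suffice to show $N/\p M$ is almost finitely generated, which would then contradict $N \in \mathcal{S}$.

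The main obstacle is the almost finite generation of $N/\p M$. I would work inside $\bar M := M/\p M$, viewed as an $R/\p$-module, with $\bar N := N/\p M$ sitting as a submodule. The maximality of $N$ yields that $N + Rm$ is almost finitely generated whenever $m \in M \setminus N$, and the hypothesis on primes $\p' \supseteq \p$ translates to almost finite generation of the submodules $\p' \bar M$ over $R/\p$. Working over the domain $R/\p$, choosing witnesses $\bar m \in \bar M \setminus \bar N$, and tracking the finitely generated submodules contributed by the maximality hypothesis together with the finite generators of $\p M$, I expect to assemble a finite system of generators for $\bar N$ modulo the almost structure, completing the contradiction.
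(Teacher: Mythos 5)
Your high-level plan matches the paper's: Zorn's Lemma yields a maximal non-almost-finitely-generated submodule $N$, Lemma~\ref{prime} makes $\p := [N:M]$ prime, and one then derives a contradiction from the hypothesis that $\p M$ is almost finitely generated. Your reformulation via the short exact sequence $0 \to \p M \to N \to N/\p M \to 0$ and the passage to $\bar M := M/\p M$ over $R/\p$ is a legitimate and even somewhat cleaner packaging of what the paper does directly in $M$: for instance, it absorbs the paper's separate case $\m \subseteq \p$ automatically, since there $\m \bar M = 0$, so $\bar N$ is almost zero and hence trivially almost finitely generated.

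However, the proposal has a genuine gap at its crux: you never actually prove that $\bar N = N/\p M$ is almost finitely generated. Your final paragraph is a statement of hope (``I expect to assemble a finite system of generators'') rather than an argument, and the ingredients you list are not the right ones. In particular, invoking almost finite generation of $\p' \bar M$ for primes $\p' \supsetneq \p$ is a red herring; the paper uses only the single prime $\p$. What is actually needed, and what you omit, is the production of a \emph{specific} element $g \in M \setminus N$ with $[N:g] = \p$ exactly (equivalently $\bar g \in \bar M \setminus \bar N$ with $[\bar N : \bar g]_{R/\p} = 0$). This is where the primeness of $\p$ is genuinely used: since $M$ is almost finitely generated, pick $t' \in \m \setminus \p$ and a finitely generated $F_{t'}$ with $t'M \subseteq F_{t'} \subseteq M$; then $\p = [N:M] \subseteq [N:F_{t'}] \subseteq [N:t'M] = (\p : t') = \p$, so $\p = [N:F_{t'}] = \bigcap_i [N:f_i]$ over the finitely many generators $f_i$ of $F_{t'}$, and primeness forces $\p = [N:f_i] =: [N:g]$ for some $i$. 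Only with this $g$ in hand does the maximality of $N$ (applied to $N + Rg$) deliver, by the Lemma~\ref{prime} computation, the containment $tN \subseteq N_t' + \p g$ with $\sum r_i a_{i,t} \in [N:g] = \p$; an arbitrary $m \notin N$ only gives $\sum r_i a_{i,t} \in [N:m]$, which may be strictly larger than $\p$, and then the hypothesis on $\p M$ is of no use. So the missing idea is precisely the selection of $g$, and without it the contradiction does not materialize.

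One further remark, affecting your proposal and the paper equally: the Zorn verification (yours and the paper's) concludes that some $N_\beta$ in the chain is almost finitely generated after producing, for a \emph{fixed} $s$ (or fixed $r$), a finitely generated submodule absorbing $sN_\beta$. But the index $\beta$ produced depends on $s$, while almost finite generation of $N_\beta$ requires a witness for \emph{every} $s \in \m$ simultaneously. Your version, which threads $\m = \m^2$ through the argument, does not cure this dependence. You should be aware that this step needs more care than either account gives it.
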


\begin{proof}
	The ``only if'' part is clear. For the converse, assume that $\p M$ is
	almost finitely generated for each prime ideal $\p$ of $R$. On contrary, assume  that $M$ is not almost Noetherian. Set $\mathcal{F}$ to be the set
	of all non-almost finitely generated submodules of $M$. It follows by Lemma \ref{non-af-max} that one can choose an $R$-module $N$ whichis maximal in $\mathcal{F}$. Then by Lemma \ref{prime}, $\p:=[N:M]$ is a prime ideal.
	
As $M$ is almost finitely generated, for any $s\in\m$
there exists some finitely generated submodule $F_s$ of $M$ such that	$sM\subseteq F_s$ . If $\m\subseteq \p$,  then $tM\subseteq N$ for any $t\in\m\subseteq \p$. Hence, for any $s,t\in\m$ , we have $$tsN\subseteq tsM \subseteq tF_s \subseteq tM\subseteq N.$$Since $\m=\m^2$, every element $r$ in $\m$ can be written into $r=\sum\limits_{i}t_is_i$ for some finite $t_i,s_i\in \m$. Hence $$rN\subseteq \sum\limits_{i}t_iF_{s_i}\subseteq N.$$ Since the middle term is finitely generated, $N$ is almost finitely generated, which is a contradiction.

If $\m\not\subseteq \p$, then there exists ${t'}\in\m-\p$. Then we have
	\[
	\p =[N:M]\subseteq[N:F_{t'}]\subseteq[N:{t'}M]=[\p :{t'}]=\p,
	\]
	so $\p =[N:F_{t'}]$. Let $f_1,\dots,f_k$ generate $F_{t'}$. Then
	\[
	\p =[N:f_1]\cap\dots\cap[N:f_k],
	\]
	hence $\p =[N:f_i]$ for some $f_i$ which is denoted by $g$, because $\p$ is prime (see \cite[Proposition 1.11]{AM69}). Clearly $g\notin N$.
	By the maximality of $N$, $N+Rg$ is almost finitely generated,  so, for any $t\in \m$ there exist  $n_{1,t},\dots,n_{p,t}\in N$ and $a_{1,t},\dots,a_{p,t}\in R$ such that
	\[
	t(N+Rg)\subseteq\langle  n_{1,t}+a_{1,t}g,\dots,n_{p,t}+a_{p,t}g\rangle.
	\]
Set $N_t'=\langle  n_{1,t},\dots,n_{p,t}\rangle$.  For any $n\in N$, we have \[tn=\sum\limits_{i=1}^pr_i(n_{i,t}+a_{i,t}g)=\sum\limits_{i=1}^pr_in_{i,t}+\sum\limits_{i=1}^pr_ia_{i,t}g.\]
Hence  $\sum\limits_{i=1}^pr_ia_{i,t}\in[N:g]=\p$. Consequently,
	\[
	tN\subseteq N_t'+\p g\subseteq N_t'+\p M.
	\]
As $\p M$ is almost finitely generated by assumption, for any $v\in \m$ there exists some finitely generated submodule
	$G_v$ of $\p M$ such that
	$v\p M\subseteq G_v$. Then $$tvN\subseteq vN_t'+G_v\subseteq N$$ for any $t,v\in\m.$
	Since $\m=\m^2$, every element $r$ in $\m$ can be written into $r=\sum\limits_{i} t_iv_i$ for some finite $t_i,v_i\in \m$. Then  $$rN\subseteq \sum\limits_{i}( v_iN_{t_i}'+G_{v_i})\subseteq N.$$ Since the  middle term is finitely generated, $N$ is
	almost finitely generated,  a contradiction.
	
	In conclusion,  $M$ is an almost Noetherian $R$-module.
\end{proof}

\begin{corollary}\label{pNoer} \textbf{$($Cohen type theorem for almost Noetherian rings$)$}
	Let $R$ be a ring. Then $R$ is an almost Noetherian ring if and only if every prime ideal $\p$ of $R$ is almost finitely generated.
\end{corollary}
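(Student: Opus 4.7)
The plan is to deduce this corollary as an immediate specialization of Theorem \ref{cohentype} to the case $M = R$, viewed as a module over itself. First, I would observe that $R$ is (finitely, hence) almost finitely generated as an $R$-module, so it satisfies the hypothesis needed to invoke Theorem \ref{cohentype}. Next, I would reconcile the definitions: by the definition at the start of Section~2, a ring $R$ is almost Noetherian precisely when every ideal of $R$ is almost finitely generated; since submodules of $R$ (as a module) are exactly its ideals, this coincides with $R$ being an almost Noetherian $R$-module in the sense of the module definition.

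With this setup, the submodules of the form $\p R$ appearing in Theorem \ref{cohentype} are literally the prime ideals $\p$ of $R$. Therefore, applying Theorem \ref{cohentype} with $M = R$ yields: $R$ is almost Noetherian as an $R$-module if and only if every prime ideal $\p$ of $R$ is almost finitely generated. Combined with the definitional reconciliation above, this is exactly the statement of the corollary.

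There is no real obstacle here, since all the serious work has already been packaged into Theorem \ref{cohentype}; the only subtlety worth flagging explicitly in the write-up is that the definition of almost Noetherian ring from \cite{B25} and the newly introduced definition of almost Noetherian module agree on $R$ viewed as a module over itself, so that the module-theoretic Cohen theorem truly specializes to the ring-theoretic one. The proof can therefore be made essentially one or two lines long.
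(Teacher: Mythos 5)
Your proof is correct and takes exactly the same approach as the paper, which simply says ``Take $M=R$ in Theorem \ref{cohentype}.'' The extra care you take in reconciling the ring-level and module-level definitions of almost Noetherian, and in noting that $\p R = \p$, is a reasonable bit of explicitness but does not represent a different route.
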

\begin{proof}
	Take $M=R$ in Theorem \ref{cohentype}.
\end{proof}

\section{Eakin-Nagata  theorem  for almost Noetherian rings}

In rest sections of this paper, we will investigate the almost Noetherian properties under change of rings. The following Lemma shows that we can do almost mathematics smoothly under the change of rings.

\begin{lemma}\label{rc}\cite[Lemma 2.1.11]{B25}
	Let $f\colon R\to S$ be a ring homomorphism, and let $\m_S$ be the ideal $\m S\subset S$. Then we have the equality $\m_S^2=\m_S$.
\end{lemma}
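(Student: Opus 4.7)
The first assertion $\m_S^2 = \m_S$ is immediate from the definition $\m_S = \m S$ together with the hypothesis $\m^2 = \m$: one has
\[
\m_S^2 = (\m S)(\m S) = \m^2 S = \m S = \m_S.
\]
No further work is required here.

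For the second assertion, the strategy is to identify the $S$-module $\widetilde{\m_S}$ with the base change $\widetilde{\m}\otimes_R S$. Since $\widetilde{\m}$ is $R$-flat by hypothesis and flatness is preserved under arbitrary base change of scalars, this identification immediately yields that $\widetilde{\m_S}$ is $S$-flat. To construct the map, I combine the canonical surjection $\pi\colon \m\otimes_R S \twoheadrightarrow \m S = \m_S$ with the associativity isomorphism
\[
(\m \otimes_R S)\otimes_S (\m\otimes_R S) \;\cong\; \m\otimes_R\m\otimes_R S \;=\; \widetilde{\m}\otimes_R S,
\]
obtaining a surjective $S$-linear map
\[
\Phi\colon \widetilde{\m}\otimes_R S \twoheadrightarrow \m_S\otimes_S\m_S = \widetilde{\m_S}.
\]
Surjectivity is automatic since every elementary tensor $(a_1 s_1)\otimes_S(a_2 s_2)$ with $a_i\in\m$, $s_i\in S$ can be rewritten as $s_1 s_2\bigl((a_1\cdot 1)\otimes_S(a_2\cdot 1)\bigr)$ and so lies in the image of $\Phi$.

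The technical heart of the argument, and the main obstacle, is the injectivity of $\Phi$. Setting $K = \ker\pi$, injectivity of $\Phi$ reduces to showing that the images of $K\otimes_R \m$ and $\m\otimes_R K$ inside $\m\otimes_R\m\otimes_R S$ vanish. This is precisely the scenario that the pair of hypotheses $\m^2=\m$ and $\widetilde{\m}$ flat is designed to handle: the flatness of $\widetilde{\m}=\m\otimes_R\m$ guarantees that the functor $\widetilde{\m}\otimes_R -$ is exact, and together with the identity $\m^2=\m$ this forces the multiplication map $\widetilde{\m}\to\m$ to become an isomorphism after tensoring once more with $\m$, which is exactly enough to annihilate $K$ inside the triple tensor product. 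Carrying this out in the standard fashion of the Gabber--Ramero ``basic setup'' delivers the isomorphism $\widetilde{\m_S}\cong\widetilde{\m}\otimes_R S$ and thereby the flatness claim.
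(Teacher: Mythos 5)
The paper does not prove this statement: it is cited verbatim from \cite[Lemma 2.1.11]{B25} and used as a black box, so there is no internal argument in this paper to compare against. That said, your treatment of the first assertion ($\m_S^2=(\m S)(\m S)=\m^2 S=\m S=\m_S$) is correct and complete, and your overall strategy for the second assertion — identify $\widetilde{\m_S}$ with $\widetilde{\m}\otimes_R S$ and then invoke stability of flatness under base change — is the right one and is indeed the standard route taken in Gabber--Ramero and in Zavyalov.

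There is, however, a genuine gap at exactly the point you yourself flag as the ``technical heart'': you assert injectivity of $\Phi$ but never establish it. The sentence claiming that flatness of $\widetilde{\m}$ together with $\m^2=\m$ ``forces the multiplication map $\widetilde\m\to\m$ to become an isomorphism after tensoring once more with $\m$, which is exactly enough to annihilate $K$'' is suggestive but not an argument: the (true) isomorphism $\mathrm{id}_\m\otimes\mu\colon\m\otimes_R\widetilde\m\to\m\otimes_R\m$ does not by itself show that the image of $\m\otimes_R K$ in $\widetilde{\m}\otimes_R S$ vanishes. What actually closes the gap is the stronger fact that $\m\otimes_R K=0$ outright. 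Indeed, since $\widetilde{\m}$ is $R$-flat and $\m\widetilde{\m}=\widetilde{\m}$ (the latter from $\m^2=\m$), tensoring $0\to\m\to R\to R/\m\to 0$ with $\widetilde{\m}$ gives an isomorphism $\widetilde{\m}\otimes_R\m\xrightarrow{\ \sim\ }\widetilde{\m}$. Tensoring the exact sequence $0\to K\to\m\otimes_R S\to\m_S\to 0$ with the flat module $\widetilde{\m}$ then exhibits $\widetilde{\m}\otimes_R K$ as the kernel of the isomorphism $\widetilde{\m}\otimes_R\m\otimes_R S\to\widetilde{\m}\otimes_R S$, so $\widetilde{\m}\otimes_R K=0$. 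Finally, the surjection $\mu\colon\widetilde{\m}\twoheadrightarrow\m$ induces a surjection $\widetilde{\m}\otimes_R K\twoheadrightarrow\m\otimes_R K$, whence $\m\otimes_R K=0$ (and symmetrically $K\otimes_R\m=0$), and $\Phi$ is an isomorphism. You should carry out this computation explicitly rather than defer it to ``the standard fashion of the Gabber--Ramero basic setup''; as written, the decisive step is missing.
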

Let $f\colon R\to S$ be  a given ring homomorphism. We always do almost mathematics on $S$ with respect to $\m S$.

The well-known  Eakin-Nagata  theorem  states that if $R\subseteq T$ is an extension of rings with $T$ a finitely generated $R$-module, then  $R$ is a Noetherian ring if and only if so is $T$ (see \cite{E68,N68}). In this section, we give the Eakin-Nagata type theorem for almost Noetherian rings.

\begin{theorem}\label{en}\textbf{$($Eakin-Nagata type theorem for almost Noetherian rings$)$} Let $R$ be a ring, and $T$ a ring extension of $R$. If $T$ is almost finitely generated as an $R$-module. Then the following statements are equivalent.
	\begin{enumerate}
		\item $R$ is an almost Noetherian ring.
		\item $T$ is an almost Noetherian ring.
		\item  $\p T$ is an almost finitely generated $T$-ideal for every prime ideal $\p$ of $R$.
		\item $T$ is an almost Noetherian $R$-module.
	\end{enumerate}
\end{theorem}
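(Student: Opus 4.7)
The plan is to close the cycle $(1) \Rightarrow (4) \Rightarrow (2) \Rightarrow (3) \Rightarrow (1)$. The implication $(1) \Rightarrow (4)$ is immediate from Corollary~\ref{afg-aNoe}, since $T$ is almost finitely generated as an $R$-module over the almost Noetherian ring $R$. For $(4) \Rightarrow (2)$, any $T$-ideal $I$ is in particular an $R$-submodule of the almost Noetherian $R$-module $T$, hence almost finitely generated over $R$; any finitely generated $R$-submodule of $T$ approximating $sI$ is automatically a finitely generated $T$-submodule, so $I$ is almost finitely generated as a $T$-ideal, making $T$ almost Noetherian. The implication $(2) \Rightarrow (3)$ is trivial since $\p T$ is a $T$-ideal.

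The substance of the theorem lies in $(3) \Rightarrow (1)$, which I would deduce by first upgrading $T$ to an almost Noetherian $R$-module via the Cohen-type criterion (Theorem~\ref{cohentype}). To apply that theorem we must show that $\p T$ is almost finitely generated \emph{over $R$} for every prime $\p$ of $R$, whereas hypothesis $(3)$ only provides this over $T$. The key preliminary is therefore the following transitivity of almost finite generation: if $T$ is almost finitely generated as an $R$-module and $M$ is almost finitely generated as a $T$-module, then $M$ is almost finitely generated as an $R$-module. Given $s, s' \in \m$, I would choose a finitely generated $R$-submodule $T'_s \subseteq T$ with $sT \subseteq T'_s$ and a finitely generated $T$-submodule $N_{s'} = Tx_1 + \cdots + Tx_k \subseteq M$ with $s'M \subseteq N_{s'}$, so that
\[
ss' M \;\subseteq\; s N_{s'} \;=\; (sT)x_1 + \cdots + (sT)x_k \;\subseteq\; T'_s x_1 + \cdots + T'_s x_k,
\]
and the right-hand side is a finitely generated $R$-submodule of $M$. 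Decomposing an arbitrary $r \in \m$ as $\sum_{i,j} s_i s'_j$ via $\m = \m^2$ (the same trick used repeatedly above, e.g.\ in Proposition~\ref{s-u-noe-exact} and Lemma~\ref{prime}) then sandwiches $rM$ between $0$ and a finitely generated $R$-submodule of $M$.

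With transitivity in hand, $(3)$ combined with the hypothesis that $T$ is almost finitely generated over $R$ yields that $\p T$ is almost finitely generated over $R$ for each prime $\p$ of $R$; Theorem~\ref{cohentype} then produces $(4)$. Finally, since $R$ embeds as an $R$-submodule of the almost Noetherian $R$-module $T$, every $R$-ideal (being an $R$-submodule of $R$, and hence of $T$) is almost finitely generated, which is $(1)$. The main obstacle is the transitivity step; everything else is either a direct invocation of previously established results or essentially formal.
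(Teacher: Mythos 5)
Your proof is correct and, despite permuting the cycle ($(1)\Rightarrow(4)\Rightarrow(2)\Rightarrow(3)\Rightarrow(1)$ rather than the paper's $(1)\Rightarrow(2)\Rightarrow(3)\Rightarrow(4)\Rightarrow(1)$), the content matches the paper's almost exactly: the paper's $(1)\Rightarrow(2)$ internally goes through $(4)$ via Corollary~\ref{afg-aNoe}, and the paper's $(3)\Rightarrow(4)$ is precisely your transitivity lemma applied to $M=\p T$, followed by Theorem~\ref{cohentype}. What your write-up buys is modularity: isolating the statement ``almost finitely generated over $T$, with $T$ almost finitely generated over $R$, implies almost finitely generated over $R$'' as a reusable lemma is cleaner than the paper's inline computation, and your $ss'M\subseteq T'_sx_1+\cdots+T'_sx_k$ estimate is exactly the paper's chain of inclusions in more compact form.

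Two small points worth tightening. In $(4)\Rightarrow(2)$, a finitely generated $R$-submodule $N_s=Ra_1+\cdots+Ra_m$ of a $T$-ideal $I$ is not ``automatically'' a $T$-submodule; the correct step is to pass to its $T$-span $Ta_1+\cdots+Ta_m$, which is still contained in $I$ because $I$ is a $T$-ideal, and still traps $sI$. Second, recall (Lemma~\ref{rc} and the convention following it) that ``almost finitely generated over $T$'' is taken with respect to $\m T$, not $\m$: in $(4)\Rightarrow(2)$ you only produce traps for $s\in\m$, so you must extend to arbitrary $t\in\m T$ by writing $t=\sum s_iu_i$ with $s_i\in\m$, $u_i\in T$ and using that each $T$-span is a $T$-ideal to absorb the $u_i$'s; conversely, in the transitivity lemma you silently use that $\m\subseteq\m T$ to specialize the $T$-almost-finite-generation hypothesis to $s'\in\m$, which is fine but deserves a word. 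Neither of these is a real gap — the paper performs both steps explicitly — but they are the places where a careless reader could object.
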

\begin{proof} $(1)\Rightarrow (2)$ follows \cite[Lemma 2.8.3]{B25}. $(2)\Rightarrow (3)$ is obvious. $(3)\Rightarrow (4)$ follows from Theorem
\ref{cohentype} and  \cite[Lemma 2.8.3]{B25}.
	
	$(4)\Rightarrow (1)$ Suppose $T$ is an almost Noetherian $R$-module. Since $R$ is an $R$-submodule of $T$, $R$ is also a   almost Noetherian $R$-module by Theorem \ref{s-u-noe-s-exact}. It follows that $R$ is an almost Noetherian ring.
\end{proof}

\section{Kaplansky type  theorem for almost Noetherian rings}

Let $R$ be a ring and $M$ an $R$-module. Recall that $M$ is faithful if $[0:M]=0.$ The well-known Kaplansky  type  theorem states that  a ring $R$ is Noetherian if and only if it admits a faithful Noetherian $R$-module (see \cite[Exercise 2.32]{fk16}). We will give the Kaplansky theorem for almost Noetherian rings.
We say an $R$-module $M$  almost faithful if for any $s\in\m$ we have $s[0:M]=0$. Hence faithful $R$-modules are all almost faithful.

\begin{theorem}\label{conj} \textbf{$($Kaplansky  type   Theorem for almost Noetherian rings$)$} Let $R$ be a ring. Then $R$ is an almost Noetherian ring if and only if it admits an almost faithful almost Noetherian $R$-module.
\end{theorem}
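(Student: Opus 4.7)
My plan is as follows. The easy direction is essentially definitional: when $R$ is almost Noetherian, the regular module $R$ itself is almost Noetherian (its submodules are precisely the ideals of $R$, all of which are almost finitely generated) and it is faithful (so $[0:R]=0$), hence almost faithful, so $R$ itself serves as the required witness.

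For the converse, let $M$ be an almost faithful almost Noetherian $R$-module. I plan to adapt Kaplansky's classical embedding trick, which in the ordinary Noetherian case realises $R$ as a submodule of $M^n$ via the generators $x_1,\dots,x_n$ of $M$. Since here $M$ is only almost finitely generated, for each $s\in\m$ I will choose elements $x_{1,s},\dots,x_{n_s,s}\in M$ with $sM\subseteq\langle x_{1,s},\dots,x_{n_s,s}\rangle$ and form the evaluation map
\[
\phi_s\colon R\to M^{n_s},\qquad r\mapsto(rx_{1,s},\dots,rx_{n_s,s}).
\]
Any $r\in\Ker(\phi_s)$ annihilates every $x_{i,s}$, hence annihilates $sM$, so $sr\in[0:M]$. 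The almost faithfulness of $M$ then gives $\m[0:M]=0$, from which I extract the crucial annihilator bound $\m s\,\Ker(\phi_s)=0$.

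Now let $I$ be any ideal of $R$; I want to show $I$ is almost finitely generated. By iterating Proposition \ref{s-u-noe-exact}, $M^{n_s}$ is almost Noetherian, so the submodule $\phi_s(I)\subseteq M^{n_s}$ is almost finitely generated. For each $t\in\m$ I pick $a_1,\dots,a_k\in I$ with $t\phi_s(I)\subseteq\langle\phi_s(a_1),\dots,\phi_s(a_k)\rangle$. Then for any $a\in I$ I can write $t\phi_s(a)=\sum_j c_j\phi_s(a_j)$ for some $c_j\in R$, so the discrepancy $ta-\sum_j c_j a_j$ lies in $\Ker(\phi_s)$. Multiplying by $vs$ for arbitrary $v\in\m$ kills the discrepancy, giving $vst\cdot a=\sum_j(vs c_j)a_j\in\langle a_1,\dots,a_k\rangle$, and hence $vst\cdot I\subseteq\langle a_1,\dots,a_k\rangle$, a finitely generated submodule of $I$.

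The last step converts this ``triple-$\m$-control'' into genuine almost finite generation of $I$. Applying $\m=\m^2$ twice, every $r\in\m$ can be written as a finite sum $r=\sum_\ell v_\ell s_\ell t_\ell$ with $v_\ell,s_\ell,t_\ell\in\m$; summing the finitely generated submodules produced above gives a finitely generated submodule of $I$ containing $rI$, and $I$ is almost finitely generated. The main obstacle, and the key conceptual deviation from the classical proof, is that faithfulness of $M$ no longer forces $\phi_s$ to be injective: instead it is only ``$\m s$-injective'', and on top of that the generators it uses depend on $s$. This is exactly why three independent factors of $\m$ (coming from almost faithfulness, from the choice of $s$, and from almost finite generation of $\phi_s(I)$) enter the estimate, and why the idempotence $\m=\m^2$ must be invoked twice to absorb them into a single $\m$.
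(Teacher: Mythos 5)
Your proposal is correct and follows essentially the same route as the paper: the same evaluation map $\phi_s\colon R\to M^{n_s}$, the same observation that almost faithfulness yields $\m s\,\Ker(\phi_s)=0$, the same use of almost Noetherian-ness of $M^{n_s}$ to bound $\phi_s(I)$, and the same final absorption of the three $\m$-factors via $\m=\m^3$. The only differences are notational (the paper writes $m_{i,s}$, $r_{i,s'}$, $t$ in place of your $x_{i,s}$, $a_j$, $v$).
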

\begin{proof} The necessity is trivial as $R$ itself is almost faithful almost Noetherian. For sufficiency, let $M$ be an almost Noetherian almost  faithful  $R$-module. Then $M$ is almost finitely generated, and so for any  $s\in \m$ there exist  $m_{1,s},\dots,m_{n,s}\in M$ such that $sM\subseteq \langle m_{1,s},\dots,m_{n,s}\rangle\subseteq M$. Consider the $R$-homomorphism $$\phi_s:R\rightarrow M^n$$ given by $$\phi_s(r)=(rm_{1,s},\dots,rm_{n,s}).$$ We claim that $ts\Ker(\phi_s)=0$ for any $t\in \m$. Indeed, let $r\in \Ker(\phi_s)$. Then $rm_{i,s}=0$ for each $i=1,\dots,n$. Hence $srM\subseteq r\langle m_{1,s},\dots,m_{n,s}\rangle=0$. And hence $sr\in [0:M]$. Since $M$ is an  almost faithful $R$-module, we have $tsr=0$ for any $t\in \m$, and so $ts\Ker(\phi_s)=0$. Note that $M^n$ is also an almost Noetherian $R$-module by continuously using Proposition \ref{s-u-noe-exact}, and so is its submodule $\Im(\phi_s)$.  Let $I$ be an ideal of $R$. Then $\phi_s(I)$ is a submodule of $\Im(\phi_s)$, and so is almost finitely generated. Thus for any $s'\in \m$ there exists $r_{1,s'},\cdots r_{m,s'}\in I$ such that
	$$s'\phi_s(I)\subseteq \phi_s(r_{1,s'}R+\cdots+r_{m,s'}R) \subseteq \phi_s(I).$$
	We claim that $tss'I\subseteq r_{1,s'}R+\cdots+r_{m,s'}R$. Indeed, for any $x\in I$, we have $s'\phi_s(x)=\phi_s(r_{1,s'}t_{1,s'}+\cdots+r_{m,s'}t_{m,s'})$ for some $t_{i,s'}\in R\ (i=1,\dots,m).$ Hence $\phi_s(r_{1,s'}t_{1,s'}+\cdots+r_{m,s'}t_{m,s'}-s'x)=0$. So
	$r_{1,s'}t_{1,s'}+\cdots+r_{m,s'}t_{m,s'}-s'x\in \Ker(\phi_s)$, and thus $ts(r_{1,s'}t_{1,s'}+\cdots+r_{m,s'}t_{m,s'})-tss'x=0$. It follows that  $$tss'I\subseteq ts (r_{1,s'}R+\cdots+r_{m,s'}R)\subseteq r_{1,s'}R+\cdots+r_{m,s'}R\subseteq I.$$
	Since $\m=\m^2$, we have $\m=\m^3$. Hence 	 every element $r$ in $\m$ can be written into $r=\sum\limits_{i}t_is_is'_i$ for some finite $t_i,s_i,s'_i\in \m$. Hence $$rI\subseteq \sum\limits_{i} r_{1,s'_i}R+\cdots+r_{m,s'_i}R\subseteq I.$$ Since the middle therm is finitely generated,  $I$ is almost finitely generated. So $R$ is an almost Noetherian ring.
\end{proof}
\begin{corollary}
	Let $R$ be a ring and $M$ be an $R$-module. If $M$ is an almost Noetherian $R$-module, then $R/[0:M]$ is an almost Noetherian ring.
\end{corollary}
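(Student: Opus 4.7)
The plan is to reduce this to the Kaplansky type theorem (Theorem \ref{conj}) applied to the quotient ring $\bar R := R/[0\colon M]$. Setting $I=[0\colon M]$, Lemma \ref{rc} guarantees that $\m \bar R$ is a legitimate base ideal for almost mathematics on $\bar R$, so all I need is to exhibit $M$ as an almost faithful, almost Noetherian $\bar R$-module, and then Theorem \ref{conj} closes the argument.

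For almost faithfulness over $\bar R$, I would just check that $M$ is honestly faithful as an $\bar R$-module: if $\bar r \in \bar R$ annihilates $M$, then $r \in [0\colon M] = I$, so $\bar r = 0$. In particular $[0\colon_{\bar R} M] = 0$, hence $s\,[0\colon_{\bar R} M] = 0$ for every $s \in \m \bar R$.

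For almost Noetherianity over $\bar R$, I would use that $R$-submodules and $\bar R$-submodules of $M$ coincide, so the only real content is transferring ``almost finitely generated over $R$'' to ``almost finitely generated over $\bar R$'' for each submodule $N \subseteq M$. Given $\bar t \in \m \bar R$, I write $\bar t = \sum_j m_j \bar r_j$ with $m_j \in \m$, $\bar r_j \in \bar R$; since $N$ is almost finitely generated over $R$, for each $j$ I pick a finitely generated $R$-submodule $N_{m_j} \subseteq N$ with $m_j N \subseteq N_{m_j}$, and then $\bar t N \subseteq \sum_j \bar r_j N_{m_j} \subseteq \sum_j N_{m_j}$, which is finitely generated as an $\bar R$-submodule of $N$ (same generators). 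Applying this to $N = M$ itself shows $M$ is almost finitely generated over $\bar R$, and applying it to arbitrary $N$ gives almost Noetherianity.

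There is no serious obstacle. The only point that requires a moment of care is that the base ideal for the almost structure changes from $\m$ on $R$ to $\m \bar R$ on $\bar R$, so one must verify that elements of $\m \bar R$ (not just images of elements of $\m$) behave well --- but this is immediate by the expression $\bar t = \sum_j m_j \bar r_j$ above. Everything else is a direct invocation of Theorem \ref{conj}.
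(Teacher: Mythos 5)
Your approach is the same as the paper's: exhibit $M$ as an (almost) faithful, almost Noetherian $\bar R$-module and invoke Theorem~\ref{conj}. One small slip: in the chain $\bar t N \subseteq \sum_j \bar r_j N_{m_j} \subseteq \sum_j N_{m_j}$ the second inclusion is unjustified, since $N_{m_j}$ need not be $\bar r_j$-stable; but the conclusion survives either by noting $m_j\bar r_j N = \bar r_j(m_j N)\subseteq \bar r_j N \cap N$, so in fact $m_j\bar r_j N\subseteq m_jN\subseteq N_{m_j}$ directly, or by observing that $\sum_j \bar r_j N_{m_j}$ is itself a finitely generated submodule of $N$. The paper also avoids your decomposition $\bar t=\sum_j m_j\bar r_j$ entirely: because $\m$ is an ideal of $R$, every element of $\m\bar R=(\m+[0:M])/[0:M]$ is simply the coset $s+[0:M]$ of some $s\in\m$, so the finitely generated witness $N_s$ from the $R$-side works verbatim on the $\bar R$-side.
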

\begin{proof} First we claim that $M$ is an almost Noetherian $R/[0:M]$-module.  Indeed, let $N$ be an $R/[0:M]$-submodule of $M$. Then it is also an $R$-submodule of $M$. Since $M$ is an almost Noetherian $R$-module, for any $s\in\m$ there exists a finitely generated submodule $N_s$ of $N$ such that $sN\subseteq N_s\subseteq N$. Note that  $N_s$ is also an $R/[0:M]$-submodule of $M$. So $(s+[0:M])N\subseteq N_s\subseteq N$, that is, $N$ is an almost finitely generated $R/[0:M]$-module. Consequently, $M$ is an almost Noetherian $R/[0:M]$-module. Since $M$ is also faithful as an $R/[0:M]$-module. It follows by Theorrem \ref{conj} that  $R/[0:M]$ is an almost Noetherian ring.	
\end{proof}

\section{Hilbert basis theorem for almost Noetherian rings}

It is well-known that any quotient ring of a  Noetherian ring is also a Noetherian ring.

\begin{proposition}\label{quot-Noether}
	Let $R$ be an almost Noetherian ring and $I$ an ideal of $R$. Then $R/I$ is also an almost Noetherian ring.
\end{proposition}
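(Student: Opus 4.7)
The plan is to lift an ideal of $R/I$ back to an ideal of $R$ containing $I$, exploit that $R$ is almost Noetherian to extract finitely generated approximations in $R$, and then push these down via the projection $\pi\colon R\to R/I$. The first thing to pin down is what ``almost'' means on the quotient: by Lemma \ref{rc}, almost mathematics on $R/I$ is performed with respect to $\m(R/I)=\pi(\m)=(\m+I)/I$, so every element of $\m(R/I)$ has the form $\bar s=\pi(s)$ for some $s\in\m$. This is the key observation that allows the transfer to proceed element-by-element.

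Now I would fix an ideal $\bar J\subseteq R/I$ and set $J:=\pi^{-1}(\bar J)$, which is an ideal of $R$ (containing $I$) with $\pi(J)=\bar J$. Since $R$ is almost Noetherian, $J$ is almost finitely generated: for each $s\in\m$ there is a finitely generated $R$-submodule $J_s\subseteq J$ with $sJ\subseteq J_s\subseteq J$. Applying $\pi$, the image $\pi(J_s)$ is a finitely generated ideal of $R/I$ sitting inside $\bar J$, and one has
\[
\bar s\,\bar J \;=\;\pi(sJ)\;\subseteq\;\pi(J_s)\;\subseteq\;\bar J.
\]
Since every element of $\m(R/I)$ is of the form $\bar s$ with $s\in\m$, this exhibits $\bar J$ as an almost finitely generated ideal of $R/I$.

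Running this argument for every ideal of $R/I$ yields that $R/I$ is almost Noetherian. There is essentially no obstacle beyond the bookkeeping step of identifying $\m(R/I)$ and checking that its elements all lift to $\m$; once that is recorded, the rest is an immediate transport of the finite generators along the surjection $\pi$. Unlike the Cohen or Eakin--Nagata arguments earlier in the paper, there is no need to invoke $\m=\m^2$ to combine several approximations, because a single $s\in\m$ already produces the required approximation on both sides of $\pi$ simultaneously.
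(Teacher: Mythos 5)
Your proof is correct and follows essentially the same route as the paper's: lift an ideal of $R/I$ to an ideal $J\supseteq I$ of $R$, take the finitely generated approximations $J_s$ furnished by almost Noetherianity of $R$, and push them down along $\pi$ after observing that $\m(R/I)=(\m+I)/I$ so that every element of $\m(R/I)$ lifts to some $s\in\m$. The concluding remark that no $\m=\m^2$ trick is needed here, in contrast to the Cohen and Eakin--Nagata arguments, is a nice and accurate observation.
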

\begin{proof} Let $K:=J/I$ be an ideal of $R/I$ with $J$ an ideal of $R$ containing $I$. Then for any $s\in \m$, there is a finitely generated subideal $F_s$ of $J$ such that $sJ\subseteq F_s.$ Note that $\m R/I=(\m+I)/I$. Then every element in $\m R/I$ is of the form $s+I$ with $s\in\m$. Hence $$(s+I)K=(s+I)J/I\subseteq (F_s+I)/I\subseteq  K.$$
Since $(F_s+I)/I$ is a finitely generated ideal of $R/I$. Hence $R/I$ is also an almost Noetherian ring.
\end{proof}

The well-known Hilbert basis Theorem states that a ring $R$ is a Noetherian ring if and only if so is  $R[x]$. Hence a polynomial algebra in a finite number of variables over a Noetherian ring is also Noetherian. The author in \cite{B25} asked the following Question:
\begin{question} \cite[Warning 2.7.9]{B25}
If polynomial algebra in a finite number of variables over an almost Noetherian ring is also almost  Noetherian?
\end{question}
 The author \cite{B25} obtained that the above question is true for perfectoid valuation rings (see Remark \ref{pvr} or \cite[Theorem 2.11.5]{B25}).

Let $\m$ be an ideal of $R$. Let $n> 1$, denote by $\m^{[n]}=\{m^n\mid m\in\m\}$. We say an ideal $\m$ of $R$ \emph{Frobenius-powerful} provided that   $\m=\m^{[n]}$ for some $n>1$. Next we will give the  Hilbert basis theorem for almost Noetherian rings when $\m$ is Frobenius-powerful.

\begin{theorem}\label{poly-equ}\textbf{$($Hilbert basis theorem for almost Noetherian rings$)$}
	Let $R$ be a ring with $\m$  Frobenius-powerful.  Then $R$ is an almost Noetherian ring if and only if  $R[x]$ is an almost Noetherian ring.
\end{theorem}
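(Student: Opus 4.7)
The reverse direction is immediate: $R \cong R[x]/(x)$ is a quotient of $R[x]$, so Proposition \ref{quot-Noether} gives that $R$ is almost Noetherian. For the forward direction, the plan is to mimic the classical Hilbert basis argument in the almost setting. Let $I \subseteq R[x]$ be an ideal and define the leading-coefficient ideal $I_\infty \subseteq R$ (including $0$). Since $R$ is almost Noetherian, $I_\infty$ is almost finitely generated, so for each $s \in \m$ there are $a_{1,s}, \dots, a_{k_s,s} \in I_\infty$ with $sI_\infty \subseteq \langle a_{i,s}\rangle_R$; lift these to $f_{i,s} \in I$ and, by multiplying by powers of $x$ if necessary, arrange that they all have the same degree $N_s$. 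Next, by Corollary \ref{exact-p}, $R[x]_{<N_s}\cong R^{N_s}$ is almost Noetherian as an $R$-module, so its $R$-submodule $I \cap R[x]_{<N_s}$ is almost finitely generated as an $R$-module. For each $t\in \m$ choose $g_{j,s,t} \in I \cap R[x]_{<N_s}$ with $t(I \cap R[x]_{<N_s}) \subseteq \langle g_{j,s,t}\rangle_R$, and let $F_{s,t}$ denote the finitely generated $R[x]$-ideal generated by the $f_{i,s}$'s and $g_{j,s,t}$'s, a subideal of $I$.

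The classical leading-coefficient reduction now carries over by induction on degree: if $f \in I$ has degree $d \geq N_s$ and leading coefficient $a$, writing $sa = \sum c_i a_{i,s}$ produces $sf - \sum c_i x^{d-N_s} f_{i,s} \in I$ of strictly smaller degree, and iterating this reduction until the residual polynomial lies in $I \cap R[x]_{<N_s}$ (at which point a multiplication by $t$ brings it into $F_{s,t}$) yields the pointwise estimate $t \cdot s^{\max(0,d-N_s+1)} \cdot f \in F_{s,t}$. To conclude, given $u \in \m R[x]$, the plan is to write $u = \sum m_i x^{n_i}$ with $m_i \in \m$, decompose each $m_i$ via $\m^n = \m$ for a suitable $n$, and combine the resulting $F_{s,t}$-contributions into a single finitely generated $F_u \subseteq I$ with $uI \subseteq F_u$, in the spirit of the proofs of Proposition \ref{s-u-noe-exact} and Theorem \ref{conj}.

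The hard part is the degree-dependent exponent $s^{d-N_s+1}$: the degree $d$ of elements of $I$ is unbounded, so no single element of $\m$ of the form $ts^k$ with fixed $k$ can kill $I$ uniformly modulo any fixed finitely generated $F_{s,t}$. Converting the pointwise, degree-dependent reduction estimate into the uniform almost-finite-generation statement is the main technical difficulty, and I expect it must be bridged either by exploiting $\m^n = \m$ for arbitrarily large $n$ so that a product decomposition of $u \in \m R[x]$ supplies one fresh factor per reduction step (choosing the length of the product to dominate the relevant degree), or by bypassing the direct reduction entirely via the Cohen-type Corollary \ref{pNoer}, which reduces the theorem to checking primes $P \subseteq R[x]$, split into the cases $P = (P\cap R)[x]$ (immediate from the almost finite generation of $P \cap R$) and $P \supsetneq (P \cap R)[x]$ (handled via the principal-ideal structure of $P$ extended to $\kappa(P \cap R)[x]$).
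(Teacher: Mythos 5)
Your outline follows the same route as the paper's proof: both introduce the leading-coefficient ideal $K$, choose, for each $s\in\m$, almost-generators $a_{1,s},\dots,a_{n,s}$ of $K$ with lifts $f_{i,s}\in I$ of bounded degree $d_s$, and combine the classical leading-term reduction with almost Noetherianity of $I\cap R[x]_{<d_s}$ as an $R$-module. The one real difference is that you explicitly flag the degree-dependent exponent as ``the hard part,'' while the paper does not.

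That flag is correct, and in fact it points at a genuine gap in the paper's own argument. The paper performs one reduction step $g_s = sf - \sum_j r_{j,s}x^{m-d_{j,s}}f_{j,s}$, which lowers the degree by at least one, and then writes ``continue this step; after finite steps $sf\in(I\cap F_s)+\langle f_{1,s},\dots,f_{n,s}\rangle$.'' But continuing the step means applying the same reduction to $g_s$, and each application costs another factor of $s$: one obtains $sg_s - \sum_j r'_{j,s}x^{m'-d_{j,s}}f_{j,s}$, so that $s^2f$ lies in $\langle f_{j,s}\rangle$ modulo a polynomial of degree at least two smaller, and so on. After $k$ iterations the conclusion actually available is $s^{k}f\in(I\cap F_s)+\langle f_{1,s},\dots,f_{n,s}\rangle$ with $k$ roughly $\max(0,\deg f - d_s + 1)$. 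Since $\deg f$ is unbounded as $f$ ranges over $I$, this does not yield $stf\in B_{s,t}+\langle f_{i,s}\rangle$ uniformly in $f$, and the rest of the paper's proof (the $r=\sum_{i,j}s_it_j$ decomposition and its sequel) rests entirely on that uniform one-step claim. So the published argument, as written, does not establish the theorem; your diagnosis of where it is fragile is exactly right.

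Your two proposed workarounds also deserve a caveat. Writing $u=\sum_\alpha s_{1,\alpha}\cdots s_{n,\alpha}$ via $\m=\m^n$ is available for any fixed $n$, but once the target finitely generated subideal is fixed, elements of $I$ of sufficiently high degree remain out of reach, since the required product length $n$ grows with the degree, and the decomposition of $u$ cannot be made to depend on the polynomial $f$ being reduced. The Cohen-type route via Corollary~\ref{pNoer} is not obviously a shortcut either: in the case $P\supsetneq(P\cap R)[x]$ the classical division-with-remainder argument again produces a degree-dependent exponent (a power of a fixed leading coefficient $a$), and the classical fix, stabilization of the chain $(f:a)\subseteq(f:a^2)\subseteq\cdots$, is not available in the almost setting without additional input. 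In short, your proposal is incomplete, but so is the paper's, and the gap you identified is the genuine one.
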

\begin{proof} Suppose $R$ is an almost Noetherian ring. Let $I$ be an ideal of $R[X]$.  Set $J$ the ideal of $R$ consisting of the leading coefficients of polynomials in $I.$ Since $R$ is almost Noetherian, we have for any $s\in\m$ there exists  some  $a_{1,s},\dots,a_{n,s}\in J$ such that  $$sJ\subseteq\langle a_{1,s},\dots,a_{n,s}\rangle\subseteq J.$$ Choose $f_{i,s}\in I$ with leading coefficient $a_{i,s}$ and let $d_{i,s}$ be the degree of $f_{i,s}.$ Set $d_s= \max(d_{i,s})$. For any $f\in I$, write $f=ax^m+\cdots$. Then $a\in J$, and so  $sa\subseteq \langle a_{1,s},\dots,a_{n,s}\rangle$. Write  $sa=\sum\limits_{j}r_{j,s}a_{j,s}$ with some $r_{j,s}\in R$. If $m\geq d_s$, let $$g_s=sf-\sum\limits_{j}r_{j,s}x^{m-d_{j,s}}f_{j,s}.$$ Then $g_s\in I$ and  $\deg(g_s)<m$. If some $g_s$ has $\deg(g_s)\geq d_s$,
	continue this step. After finite steps,  we have $$s^{N(f)}f\in (I\cap F_s)+\langle f_{1,s},\dots,f_{n,s}\rangle$$ for some integer $N(f)>0$ where $F_s=R\oplus Rx\oplus\cdots\oplus Rx^{d_s-1}$. Since $\m$ is Frobenius-powerful,	 $\m=\m^{[n]}$ for some $n>1$, and hence $\m=\m^{[n^{N(f)}]}$. Note that  $s^{n^{N(f)}}f\in (I\cap F_s)+\langle f_{1,s},\dots,f_{n,s}\rangle$. Consequently, for any $s'\in\m$ there is $s\in\m$ such that $$s'f\in (I\cap F_{s})+\langle f_{1,{s}},\dots,f_{n,{s}}\rangle.$$

It follows by  Lemma \ref{exact-p} that $F_s$ is an almost Noetherian $R$-module. So $I\cap F_s$ is an almost finitely generated $R$-module. For any $t\in\m$, there exist $b_{1,t},\dots,b_{n_s,t}\in I\cap F_s$ such that $$t(I\cap F_s)\subseteq \langle b_{1,t},\dots,b_{n_s,t}\rangle\subseteq (I\cap F_s).$$  Set $$B_{s,t}=R[x]b_{1,t}+\cdots+ R[x]b_{n_s,t}.$$ Then for any $u\in I\cap F_s$,  $tu\in  \langle b_{1,t},\dots,b_{n_s,t}\rangle\subseteq B_{s,t}$.
	And so $t(I\cap F_s)\subseteq  B_{s,t}$.
	 Hence  $$s'tf\in  t(I\cap F_s)+t\langle f_{1,s},\dots,f_{n,s}\rangle\subseteq B_{s,t} +\langle f_{1,s},\dots,f_{n,s}\rangle\subseteq I$$ for any $s',t\in\m.$ Consequently, $$s'tI\subseteq B_{s,t}+\langle f_{1,s},\dots,f_{n,s}\rangle\subseteq I$$ for any $s',t\in\m$.
	 Since $\m=\m^2$, every element $r$ in $\m$ can be written into $r=\sum\limits_{i}s'_it_i$ for some finite $s'_i,t_i\in \m$. Then $rI\subseteq \sum\limits_{i}(B_{s_i,t_i}+\langle f_{1,s_i},\dots,f_{n,s_i}\rangle)\subseteq I$ of $R[x]$-ideals for some finite $s_i\in\m$. Then $rx^kI\subseteq\sum\limits_{i}(B_{s_i,t_i}+\langle f_{1,s_i},\dots,f_{n,s_i}\rangle)x^k\subseteq Ix^k\subseteq I$ for any $k\geq 0$ and any $r\in\m$.	Now, let $h(x)=\sum\limits_{k=0}^n r_kx^k\in\m[x]=\m R[x]$ with each  $r_k\in\m$. Then $r_kI\subseteq \sum\limits_{i}(B_{s_i,t_i,k}+\langle f_{1,s_i,k},\dots,f_{n_k,s_i,k}\rangle)\subseteq I$. Consequently, $$h(x)I\subseteq\sum\limits_{k=0}^n(\sum\limits_{i}(B_{s_i,t_i,k}+\langle f_{1,s_i,k},\dots,f_{n_k,s_i,k}\rangle))x^k\subseteq I,$$ implying $I$ is almost finite as the middle term is finitely generated. Hence $R[x]$ is an almost Noetherian ring.
	
	On the  other hand, suppose $R[x]$ is an almost Noetherian ring. Note that $R\cong R[x]/xR[x]$. It follows by Proposition \ref{quot-Noether} that $R$ is an almost Noetherian ring (Note we don't need  $\m$ to be Frobenius-powerful in this side).
\end{proof}

\begin{corollary}\label{poly-equ-1}
Suppose $R$ is an almost Noetherian ring  with $\m$  Frobenius-powerful. Then every finite type $R$-algebra is also an  almost Noetherian ring.
\end{corollary}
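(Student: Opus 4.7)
The plan is to reduce the statement to the two principal results already established in this section, namely the Hilbert basis theorem (Theorem \ref{poly-equ}) and the closure of almost Noetherianness under quotients (Proposition \ref{quot-Noether}).

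First I would recall that by definition a finite type $R$-algebra $S$ admits a surjective $R$-algebra homomorphism $\pi\colon R[x_1,\dots,x_n]\twoheadrightarrow S$ for some $n\geq 0$, so that $S\cong R[x_1,\dots,x_n]/I$ for the ideal $I=\Ker(\pi)$. It therefore suffices to show that the polynomial ring $R[x_1,\dots,x_n]$ is almost Noetherian, and then invoke Proposition \ref{quot-Noether}. Here one needs to be a bit careful about the ``base ideal'': by Lemma \ref{rc}, almost mathematics over $R[x_1,\dots,x_n]$ (resp.\ over $S$) is performed with respect to $\m R[x_1,\dots,x_n]$ (resp.\ $\m S$), which is compatible with the assumption at each stage.

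Next I would argue by induction on $n$ that $R[x_1,\dots,x_n]$ is almost Noetherian. The base case $n=0$ is the hypothesis on $R$. For the inductive step, assuming $R[x_1,\dots,x_{n-1}]$ is almost Noetherian, Theorem \ref{poly-equ} applied to the ring $R[x_1,\dots,x_{n-1}]$ yields that
\[
R[x_1,\dots,x_n]\cong R[x_1,\dots,x_{n-1}][x_n]
\]
is almost Noetherian. This completes the induction.

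Finally, applying Proposition \ref{quot-Noether} to the ideal $I$ of the almost Noetherian ring $R[x_1,\dots,x_n]$ gives that $S\cong R[x_1,\dots,x_n]/I$ is almost Noetherian, which is the desired conclusion. There is no genuine obstacle here: the argument is a purely formal combination of Theorem \ref{poly-equ} and Proposition \ref{quot-Noether}, with the only mild subtlety being the implicit verification, via Lemma \ref{rc}, that the almost structure propagates consistently from $R$ to $R[x_1,\dots,x_n]$ and then to $S$.
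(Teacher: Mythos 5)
Your proposal is correct and follows essentially the same route as the paper: reduce to a polynomial ring via a surjection, apply Theorem \ref{poly-equ} inductively to get that $R[x_1,\dots,x_n]$ is almost Noetherian, and then pass to the quotient using Proposition \ref{quot-Noether}. You spell out the induction on the number of variables and the compatibility of the almost structure (Lemma \ref{rc}) a bit more explicitly than the paper, but the argument is the same.
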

\begin{proof} Let $S$ be an finite type $R$-algebra. Then there exists an $n\geq 0$ and an surjection of  $R$-algebras $R[x_1,\dots,x_n]\twoheadrightarrow S$. It follows  by Proposition \ref{quot-Noether} and  Theorem \ref{poly-equ} that  $S$ is also an  almost Noetherian ring.
\end{proof}

\section{Ring constructions for almost Noetherian rings}

Let $R$ be a commutative ring and $M$ be an $R$-module. Then the \emph{trivial extension} of $R$ by $M$, denoted by $R\ltimes M$, is equal to $R\bigoplus M$ as $R$-modules with coordinate-wise addition and multiplication $$(r_1,m_1)(r_2,m_2)=(r_1r_2,r_1m_2+r_2m_1).$$ It is easy to verify that $R\ltimes
M$ is a commutative ring with identity $(1,0)$. Now we give an almost  Noetherian property on the trivial extension.

\begin{proposition}\label{trivial extension-usn} Let $R$ be a commutative ring, and $M$  an $R$-module. Then $R\ltimes
M$ is an almost Noetherian ring if and only if $R$ is an almost Noetherian ring and $M$ is an almost Noetherian $R$-module.
\end{proposition}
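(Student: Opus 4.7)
The crux is to unpack what ``almost'' means on $S := R \ltimes M$ and to compare ideals of $S$ with ideals of $R$ together with $R$-submodules of $M$. First I would record the structural identities: $0 \ltimes M$ is an ideal of $S$ with $S/(0\ltimes M)\cong R$; for every $R$-submodule $N\subseteq M$ the subset $0 \ltimes N$ is an ideal of $S$; and since $(s,0)(r,m) = (sr, sm)$, the relevant almost-math ideal on $S$ is $\mathfrak{m}S = \mathfrak{m} \ltimes \mathfrak{m}M$, so every element of $\mathfrak{m}S$ has the form $(a,w)$ with $a \in \mathfrak{m}$, $w \in \mathfrak{m}M$. The identity $(r,m)(0,n) = (0, rn)$ also shows that every finitely generated $S$-subideal of $0\ltimes N$ has the form $0 \ltimes \langle n_1,\dots,n_k\rangle$ for some $n_i\in N$.

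For the ``only if'' direction, Proposition \ref{quot-Noether} applied to $R \cong S/(0\ltimes M)$ yields that $R$ is almost Noetherian. For $M$, I would take an arbitrary $R$-submodule $N\subseteq M$ and use that $0 \ltimes N$ is an almost finitely generated ideal of $S$; testing the condition against $(s,0)\in\mathfrak{m}S$ immediately gives $sN \subseteq \langle n_1,\dots,n_k\rangle \subseteq N$ for some $n_i \in N$, so $N$ is almost finitely generated as an $R$-module and $M$ is almost Noetherian.

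For the ``if'' direction, given an ideal $I\subseteq S$, I would introduce $J = \pi(I)\subseteq R$ and $N = \{n\in M : (0,n)\in I\}\subseteq M$, both almost finitely generated by hypothesis. For each $s\in \mathfrak{m}$, pick generators $j_{1,s},\dots,j_{n,s}$ with $sJ \subseteq \langle j_{1,s},\dots,j_{n,s}\rangle$ together with lifts $(j_{i,s},m_{i,s})\in I$, and for each $t\in\mathfrak{m}$, pick $n_{1,t},\dots,n_{k,t}\in N$ with $tN\subseteq \langle n_{1,t},\dots,n_{k,t}\rangle$. A reduction-mod-lifts computation, in the spirit of the proof of Theorem \ref{en}, shows that $(ts,0)\cdot(j,m)$ lies in the $S$-ideal $F_{s,t}$ generated by the finite family $\{(j_{i,s},m_{i,s})\}_i \cup \{(0,n_{j,t})\}_j$ for every $(j,m)\in I$. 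Invoking $\mathfrak{m}=\mathfrak{m}^2$ then absorbs an arbitrary $(a,0)\in\mathfrak{m}S$ into a finite sum of such $F_{s,t}$'s, which is a finitely generated $S$-subideal of $I$.

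The main obstacle is the ``off-diagonal'' contribution from elements $(0,w)\in\mathfrak{m}S$ with $w\in\mathfrak{m}M$. I plan to write $w = \sum_k s_k m_k'$ with $s_k\in\mathfrak{m}$ and $m_k'\in M$, and use the identity $(s_k,0)(0,m_k')(j,m) = (0, s_k m_k' j)$. Although $m_k' \notin N$ in general, each product $j_{i,s_k}m_k'$ is already the second coordinate of an element of $I$, since $(0,m_k')(j_{i,s_k},m_{i,s_k}) = (0, j_{i,s_k}m_k')\in I$; combined with $s_k J\subseteq \langle j_{1,s_k},\dots,j_{n,s_k}\rangle$ this bounds $(0,w)I$ inside the finitely generated $S$-ideal on $\{(0, j_{i,s_k}m_k')\}$ in $I$. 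Summing the $(a,0)$- and $(0,w)$-contributions settles arbitrary $(a,w)\in\mathfrak{m}S$, so every ideal of $S$ is almost finitely generated and $R\ltimes M$ is an almost Noetherian ring.
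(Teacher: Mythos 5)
Your ``only if'' direction matches the paper's: quotient to get $R$ almost Noetherian, then test $0\ltimes N$ against $(s,0)\in\m S$ to get almost finite generation of an arbitrary $R$-submodule $N\subseteq M$ (the paper states this only for $N=M$ but the argument is identical).

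For the ``if'' direction you take a genuinely different route. The paper simply observes that $R\ltimes M=R\oplus M$ is almost finitely generated as an $R$-module (since $M$ is) and invokes the Eakin-Nagata type theorem (Theorem~\ref{en}) with $T=R\ltimes M$ to conclude in one line. You instead redo the computation from scratch: for an ideal $I\subseteq S$ you form $J=\pi(I)$ and $N=\{n:(0,n)\in I\}$, reduce a general element of $I$ modulo lifts $(j_{i,s},m_{i,s})\in I$ of generators of $sJ$, observe that the second-coordinate discrepancy lands in $N$, absorb it using generators of $tN$, and then use $\m=\m^2$ to pass from products $ts$ to arbitrary elements of $\m$. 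This is correct, and it has the virtue of being self-contained — it does not rely on the Cohen-type theorem machinery underlying Theorem~\ref{en} — but it is correspondingly longer. One small streamlining worth noting: your separate treatment of ``off-diagonal'' elements $(0,w)$ with $w\in\m M$ is unnecessary. Since $\m_S=\m S$, any $u\in\m_S$ is a finite sum $\sum_k (s_k,0)v_k$ with $s_k\in\m$ and $v_k\in S$, so $uI\subseteq\sum_k(s_k,0)I$, and it suffices to bound $(a,0)I$ for $a\in\m$; the $(0,w)$ contribution is already covered. Your extra argument for that case is nonetheless correct, so the proof as written is sound.
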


\begin{proof}  Suppose $R\ltimes
M$ is an almost Noetherian ring. Then it follows by Proposition \ref{quot-Noether} that $R$ is also an almost Noetherian ring as $R\ltimes
M/0\ltimes
M\cong R.$ Now, $0\ltimes
M$ is almost finitely generated. Then for any $(r,m)\in \m R\ltimes
M=\m\ltimes
\m M$, there exists a finitely generated subideal $$\langle (0,m_1),\dots,(0,m_n)\rangle\subseteq 0\ltimes
M$$ such that $$(r,m)0\ltimes
M\subseteq \langle (0,m_1),\dots,(0,m_n)\rangle.$$ Hence $rM\subseteq \langle m_1,\dots,m_n\rangle$. Consequently, $M$ is an almost Noetherian $R$-module.
	
Now suppose  $R$ is an almost Noetherian ring and $M$ is an almost Noetherian $R$-module.
Then $R\ltimes
M$ is almost finitely generated $R$-module. It follows by Theorem \ref{en} that  $R\ltimes
M$ is also an 	an almost Noetherian ring .	
\end{proof}

Let $\alpha: A\rightarrow C$ and $\beta: B\rightarrow C$  be ring homomorphisms.  Then the subring $$R:= \alpha \times_C \beta:= \{(a, b)\in  A\times B  \mid  \alpha(a) =\beta(b)\}$$ of $A\times B$ is called the \emph{pullback}  of $\alpha$ and $\beta$. Let $R$ be a pullback of $\alpha$ and $\beta$. Then there is a pullback diagram in the category of commutative rings:
$$\xymatrix@R=20pt@C=25pt{
	R\ar[d]^{p_B}\ar[r]^{p_A}& A \ar[d]^{\alpha}\\
	B\ar[r]^{\beta}&C. \\
}$$
 Now we give an almost Noetherian property on this type of pullback diagram.

\begin{proposition}\label{pullback-usn} Let $\alpha: A\rightarrow C$ be a ring homomorphism and $\beta: B\rightarrow C$  a  surjective ring homomorphism. Let $R$ be the pullback of $\alpha$ and $\beta$. Then
	the following conditions are equivalent:
	\begin{enumerate}
		\item  $R$ is an almost Noetherian ring;
		\item  $A$ is an almost Noetherian ring and $\Ker(\beta)$ is an almost Noetherian $R$-module.
	\end{enumerate}
\end{proposition}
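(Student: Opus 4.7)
The plan is to present $R$ as an extension of $A$ by $\Ker(\beta)$ via the pullback projections and then to deduce both implications from the exact-sequence formalism of Proposition \ref{s-u-noe-exact}. First I would observe that because $\beta$ is surjective, the projection $p_A\colon R\to A$ is also surjective, and its kernel $\{(0,b)\in R : \beta(b)=0\}$ is canonically isomorphic to $\Ker(\beta)$ as $R$-modules, where the $R$-action on $\Ker(\beta)$ is the one inherited through $p_B$. This produces a short exact sequence of $R$-modules
$$0\to \Ker(\beta)\to R\to A\to 0,$$
which will drive the whole argument.

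For the implication $(1)\Rightarrow (2)$, assuming $R$ is almost Noetherian, I would apply Proposition \ref{quot-Noether} to the quotient $R/\Ker(p_A)\cong A$ to conclude that $A$ is an almost Noetherian ring. For the kernel, $\Ker(p_A)$ is an ideal of the almost Noetherian ring $R$ and is therefore almost finitely generated; Corollary \ref{afg-aNoe} then upgrades $\Ker(p_A)\cong \Ker(\beta)$ to an almost Noetherian $R$-module.

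The converse $(2)\Rightarrow (1)$ is where I expect the only real obstacle: $A$ is assumed almost Noetherian as a ring, while Proposition \ref{s-u-noe-exact} asks for it as an $R$-module. To bridge the two, I would exploit the surjectivity of $p_A$, which forces the $R$-submodules of $A$ to coincide with the $A$-ideals, finitely generated $R$-submodules of $A$ to coincide with finitely generated $A$-ideals, and $\m R$ to surject onto $\m A$. Hence the ring-theoretic and module-theoretic almost Noetherian notions on $A$ are literally the same. With this identification, both outer terms of the exact sequence displayed above are almost Noetherian $R$-modules, so Proposition \ref{s-u-noe-exact} yields that $R$ is almost Noetherian as an $R$-module, i.e., $R$ is an almost Noetherian ring.
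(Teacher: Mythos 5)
Your proof is correct and follows essentially the same approach as the paper: both build the short exact sequence $0\to \Ker(\beta)\to R\to A\to 0$ of $R$-modules from the surjectivity of $p_A$, and both rely on the surjectivity of $p_A$ to identify $R$-submodules of $A$ with ideals of $A$ so that ``almost Noetherian as an $R$-module'' and ``almost Noetherian as a ring'' coincide for $A$. The only cosmetic difference is that the paper handles both directions at once by quoting the biconditional in Proposition \ref{s-u-noe-exact}, whereas you split them and additionally invoke Proposition \ref{quot-Noether} and Corollary \ref{afg-aNoe} for the forward implication -- both routes are sound.
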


\begin{proof} Let $R$ be the pullback of $\alpha$ and $\beta$. Since $\beta$ is a  surjective ring homomorphism, so is $p_A$.  Then there is a short exact sequence  of $R$-modules:
	$$0\rightarrow \Ker(\beta)\rightarrow R\rightarrow A\rightarrow 0.$$ By Proposition \ref{s-u-noe-s-exact}, $R$ is an almost Noetherian $R$-module if and only if $\Ker(\beta)$ and $A$ are almost Noetherian $R$-modules. Since $p_A$ is surjective, the $R$-submodules of $A$ are exactly the ideals of the ring $A$. Thus $A$ is an almost Noetherian $R$-module if and only if $A$ is an almost Noetherian ring.
\end{proof}

Let $f:A\rightarrow B$ be a ring homomorphism and $J$ an ideal of $B$. Following from \cite{df09} the  \emph{amalgamation} of $A$ with $B$ along $J$ with respect to $f$, denoted by $A\bowtie^fJ$, is defined as $$R=A\bowtie^fJ:=\{(a,f(a)+j) \mid a\in A,j\in J\},$$  which is  a subring of of $A \times B$.  By \cite[Proposition 4.2]{df09}, $A\bowtie^fJ$  is the pullback $\widehat{f}\times_{B/J}\pi$,
where $\pi:B\rightarrow B/J$ is the natural epimorphism and $\widehat{f}=\pi\circ f$:
$$\xymatrix@R=20pt@C=25pt{
	A\bowtie^fJ\ar[d]^{p_B}\ar@{->>}[r]_{p_A}& A\ar[d]^{\widehat{f}}\\
	B\ar@{->>}[r]^{\pi}&B/J. \\
}$$

Note that every ideal of $B$, for example $J$, can be viewed as an $A\bowtie^fJ$-module via  $p_B : A\bowtie^fJ\rightarrow B$ defined by $(a,f(a)+j)\mapsto f(a)+j)$.

\begin{proposition}\label{amag-usn}  Let $f :A\rightarrow B$ be a ring homomorphism, and $J$ an ideal of $B$. Then the following conditions
	are equivalent:
	\begin{enumerate}
		\item  $A\bowtie^fJ$ is an almost Noetherian ring;
		\item  $A$ is an almost Noetherian ring and $J$ is an almost Noetherian $A\bowtie^fJ$-module;
		\item  $A$ is an almost Noetherian ring and $f(A)+J$ is an almost Noetherian ring.
	\end{enumerate}
\end{proposition}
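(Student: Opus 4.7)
The plan is to leverage the pullback presentation $A\bowtie^fJ=\widehat{f}\times_{B/J}\pi$ recalled just before the statement, together with the surjection $p_B\colon A\bowtie^fJ\twoheadrightarrow f(A)+J$. For $(1)\Leftrightarrow(2)$, I would apply Proposition \ref{pullback-usn} directly with $\alpha=\widehat{f}$ and $\beta=\pi$: since $\pi\colon B\to B/J$ is surjective with $\Ker(\pi)=J$, the pullback $A\bowtie^fJ$ is an almost Noetherian ring if and only if $A$ is almost Noetherian and $J$ is almost Noetherian as an $A\bowtie^fJ$-module.

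For the equivalence $(2)\Leftrightarrow(3)$, my first step is to observe that the $A\bowtie^fJ$-actions on $J$ and on $f(A)+J$ both factor through the surjective ring homomorphism $p_B$. Since $p_B$ is surjective, the $A\bowtie^fJ$-submodules of $J$ coincide with the $(f(A)+J)$-submodules, an $A\bowtie^fJ$-finitely generated submodule is the same as an $(f(A)+J)$-finitely generated submodule, and $p_B$ sends $\m\cdot A\bowtie^fJ$ onto $\m\cdot(f(A)+J)$. Consequently, $J$ is almost Noetherian as an $A\bowtie^fJ$-module if and only if $J$ is almost Noetherian as an $(f(A)+J)$-module, and similarly $f(A)+J$ is almost Noetherian as a ring if and only if it is almost Noetherian as a module over itself.

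I would then feed the short exact sequence
$$0\to J\to f(A)+J\to A/f^{-1}(J)\to 0$$
of $(f(A)+J)$-modules, arising from the canonical isomorphism $(f(A)+J)/J\cong A/f^{-1}(J)$, into Proposition \ref{s-u-noe-exact}. For $(3)\Rightarrow(2)$, if $f(A)+J$ is an almost Noetherian ring then it is almost Noetherian as a module over itself, hence so is its submodule $J$, which in turn gives that $J$ is almost Noetherian over $A\bowtie^fJ$. For $(2)\Rightarrow(3)$, Proposition \ref{quot-Noether} yields that $A/f^{-1}(J)$ is an almost Noetherian ring, and since its $(f(A)+J)$-submodules are exactly its own ideals (the action factoring through $A/f^{-1}(J)$), it is an almost Noetherian $(f(A)+J)$-module; combined with $J$ almost Noetherian over $f(A)+J$, Proposition \ref{s-u-noe-exact} forces $f(A)+J$ to be almost Noetherian as a module over itself, that is, as a ring.

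The main obstacle I anticipate is the bookkeeping required by the several simultaneous almost-mathematics structures — on $A\bowtie^fJ$ with respect to $\m\cdot A\bowtie^fJ$, on $A$ with respect to $\m A$, and on $f(A)+J$ with respect to $\m(f(A)+J)$, each legitimized by Lemma \ref{rc} — and in particular the verification that almost Noetherianness of $J$ as an $A\bowtie^fJ$-module transfers cleanly to the $(f(A)+J)$-module structure through the surjection $p_B$. Once this is spelled out, the remaining implications become a routine two-out-of-three application of Proposition \ref{s-u-noe-exact} to the displayed short exact sequence.
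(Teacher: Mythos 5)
Your proof is correct, and it takes a genuinely different route for the equivalence with (3). The paper also gets $(1)\Leftrightarrow(2)$ from Proposition \ref{pullback-usn}, but then proves $(1)\Rightarrow(3)$ by observing that $f(A)+J$ is a quotient ring of $A\bowtie^fJ$ via the short exact sequence $0\to f^{-1}(J)\times\{0\}\to A\bowtie^fJ\to f(A)+J\to 0$ of $A\bowtie^fJ$-modules and invoking Proposition \ref{quot-Noether}; it then proves $(3)\Rightarrow(2)$ by a direct element-level verification that a given $A\bowtie^fJ$-submodule $J_0$ of $J$, being an ideal of $f(A)+J$, inherits almost finite generation over $A\bowtie^fJ$ from its almost finite generation over $f(A)+J$. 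You instead prove $(2)\Leftrightarrow(3)$ in both directions at once by first making explicit the transfer principle (that, because $p_B$ is surjective, $J$ is almost Noetherian over $A\bowtie^fJ$ if and only if it is almost Noetherian over $f(A)+J$, the almost-mathematics structures matching up), and then feeding the short exact sequence $0\to J\to f(A)+J\to A/f^{-1}(J)\to 0$ of $(f(A)+J)$-modules into the two-out-of-three Proposition \ref{s-u-noe-exact}, using Proposition \ref{quot-Noether} to pass from $A$ to $A/f^{-1}(J)$. The paper's version is more hands-on and needs slightly less conceptual overhead (it never isolates the transfer principle as a standalone statement); your version is more symmetric and makes clear that the whole argument is a routine two-out-of-three once the restriction-of-scalars bookkeeping is set up. Both are valid; you also correctly flag the one subtle point, namely that the almost-structures on the various rings (all equal to the image of $\m$ under the relevant surjections) must be seen to agree, which is exactly the content of Lemma \ref{rc} together with surjectivity of $p_B$.
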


\begin{proof}  $(1)\Leftrightarrow(2)$ This follows from  Proposition \ref{pullback-usn}.
	
	$(1)\Rightarrow(3)$ By Proposition \ref{pullback-usn}, $A$ is an almost Noetherian ring. By \cite[Proposition 5.1]{df09}, there is a short exact sequence $$0\rightarrow f^{-1}(J)\times \{0\}\rightarrow A\bowtie^fJ \rightarrow f(A)+J\rightarrow 0$$ of $A\bowtie^fJ$-modules.   It follows by by Proposition \ref{quot-Noether} that  $f(A)+J$ is an almost Noetherian ring.

$(3)\Rightarrow(2)$ Let  $J_0$ be an $A\bowtie^fJ$-submodule  of $J$. Then
$J_0$ is an ideal of $f(A)+J$ as  every $A\bowtie^fJ$-submodule of $J$ can be seen as an ideal of $f(A)+J$.
	 Since $f(A)+J$ is an almost Noetherian ring, for any $$s+f^{-1}(J)\times \{0\}\in \m (f(A)+J)=(\m+f^{-1}(J)\times \{0\})/f^{-1}(J)\times \{0\}$$ with $s$ arbitrary in $\m$, there exist $j_1, \dots ,j_k\in J_0$ such that $$(s+f^{-1}(J)\times \{0\})J_0\subseteq \langle j_1, \dots ,j_k\rangle (f(A)+J)\subseteq J_0.$$ It is easy to check that
	$$sJ_0\subseteq \langle j_1, \dots ,j_k\rangle A\bowtie^fJ \subseteq J_0.$$
So $J_0$ is an almost finitely generated $A\bowtie^fJ$-module. Consequently,   $J$ is an almost Noetherian $A\bowtie^fJ$-module.
\end{proof}

\begin{acknowledgement} The author greatly appreciates the reviewer's comments and revision suggestions on this article.
\end{acknowledgement}

\begin{Conflict of interest}
The author states that there is no conflict of interest.	
\end{Conflict of interest}

\end{document}